\newtheorem{theorem}{Theorem}
\newtheorem{observation}[theorem]{Observation}
\newtheorem{corollary}[theorem]{Corollary}
\newtheorem{lemma}[theorem]{Lemma}
\newtheorem{conjecture}[theorem]{Conjecture}
\newenvironment{lrcases}
{\left\lbrace\begin{aligned}}
{\end{aligned}\right\rbrace}
\title{Precoloring extension in planar near-Eulerian-triangulations\thanks{Supported by project 22-17398S (Flows and cycles in graphs on surfaces) of Czech Science Foundation.  An extended abstract appeared in Proceedings of the 12th European Conference on Combinatorics, Graph Theory and Applications (EUROCOMB’23).}}
\author{Zdeněk Dvořák \and Benjamin Moore \and Michaela Seifrtová\and Robert Šámal\thanks{Computer Science Institute, Charles University, Prague, Czech Republic. The second author completed this work while at Charles University, and is now at the Institute of Science and Technology, Klosterneuburg, Austria. E-mails: \url{{rakdver,mikina,samal}@iuuk.mff.cuni.cz}, \url{Benjamin.Moore}@ist.ac.at}}
\tikzset{
blackvertexv2/.style={circle, draw=black!100,fill=black!100,thick, inner sep=0pt, minimum size= 2.3mm},
blackvertex/.style={rectangle, draw=black!100,fill=black!100,thick, inner sep=0pt, minimum size= 2.3mm},
blackvertexv3/.style={regular polygon,regular polygon sides=3, draw=black!100,fill=black!100,thick, inner sep=0pt, minimum size= 2.8mm},
greenvertexv2/.style={circle, draw=black!100,fill=green!100,thick, inner sep=0pt, minimum size= 2.3mm},
greenvertex/.style={rectangle, draw=black!100,fill=green!100,thick, inner sep=0pt, minimum size= 2.3mm},
greenvertexv3/.style={regular polygon,regular polygon sides=3, draw=black!100,fill=green!100,thick, inner sep=0pt, minimum size= 2.8mm},
bluevertexv2/.style={circle, draw=black!100,fill=blue!100,thick, inner sep=0pt, minimum size= 2.3mm},
bluevertex/.style={rectangle, draw=black!100,fill=blue!100,thick, inner sep=0pt, minimum size= 2.3mm},
bluevertexv3/.style={regular polygon,regular polygon sides=3, draw=black!100,fill=blue!100,thick, inner sep=0pt, minimum size= 2.8mm},
yellowvertexv2/.style={circle, draw=black!100,fill=yellow!100,thick, inner sep=0pt, minimum size= 2.3mm},
yellowvertex/.style={rectangle, draw=black!100,fill=yellow!100,thick, inner sep=0pt, minimum size= 2.3mm},
yellowvertexv3/.style={regular polygon,regular polygon sides=3, draw=black!100,fill=yellow!100,thick, inner sep=0pt, minimum size= 2.8mm},
redvertexv2/.style={circle, draw=black!100,fill=red!100,thick, inner sep=0pt, minimum size= 2.3mm},
redvertex/.style={rectangle, draw=black!100,fill=red!100,thick, inner sep=0pt, minimum size= 2.3mm},
redvertexv3/.style={regular polygon,regular polygon sides=3, draw=black!100,fill=red!100,thick, inner sep=0pt, minimum size= 2.8mm},
tealvertexv2/.style={circle, draw=black!100,fill=teal!100,thick, inner sep=0pt, minimum size= 2.3mm},
tealvertex/.style={rectangle, draw=black!100,fill=teal!100,thick, inner sep=0pt, minimum size= 2.3mm},
tealvertexv3/.style={regular polygon,regular polygon sides=3, draw=black!100,fill=teal!100,thick, inner sep=0pt, minimum size= 2.8mm},
violetvertexv2/.style={circle, draw=black!100,fill=violet!100,thick, inner sep=0pt, minimum size= 2.3mm},
violetvertex/.style={rectangle, draw=black!100,fill=violet!100,thick, inner sep=0pt, minimum size= 2.3mm},
violetvertexv3/.style={regular polygon,regular polygon sides=3, draw=black!100,fill=violet!100,thick, inner sep=0pt, minimum size= 2.8mm},
limevertexv2/.style={circle, draw=black!100,fill=olive!100,thick, inner sep=0pt, minimum size= 2.3mm},
limevertex/.style={rectangle, draw=black!100,fill=olive!100,thick, inner sep=0pt, minimum size= 2.3mm},
limevertexv3/.style={regular polygon,regular polygon sides=3, draw=black!100,fill=olive!100,thick, inner sep=0pt, minimum size= 2.8mm},
magentavertexv2/.style={circle, draw=black!100,fill=magenta!100,thick, inner sep=0pt, minimum size= 2.3mm},
magentavertex/.style={rectangle, draw=black!100,fill=magenta!100,thick, inner sep=0pt, minimum size= 2.3mm},
magentavertexv3/.style={regular polygon,regular polygon sides=3, draw=black!100,fill=magenta!100,thick, inner sep=0pt, minimum size= 2.8mm},
dummywhite/.style={circle, draw=white!100,fill=white!100,thick, inner sep=0pt, minimum size= 0.5mm},
}
\newcommand{\WW}{\mathcal{W}}
\newcommand{\grid}{\mathbf{T}}
\begin{document}

\maketitle

\begin{abstract}
We consider the 4-precoloring extension problem in \emph{planar near-Eulerian-
triangulations}, i.e., plane graphs where all faces except possibly for the outer one
have length three, all vertices not incident with the outer face have even degree,
and exactly the vertices incident with the outer face are precolored.  We give
a necessary topological condition for the precoloring to extend, and give a complete
characterization when the outer face has length at most five and when all vertices
of the outer face have odd degree and are colored using only three colors.
\end{abstract}

\section{Introduction}

Recall that a \emph{$k$-coloring} of a graph $G$ is a mapping using $k$ colors such that adjacent
vertices receive different colors and that a graph is \emph{Eulerian} if
all of its vertices have even degree. We study the precoloring
extension problem for planar (near) Eulerian triangulations from
an algorithmic perspective.

Famously, the Four Color Theorem states that all planar graphs are
$4$-colorable~\cite{fourcolthm} and thus from an algorithmic point of view,
the problem of determining if a planar graph is $4$-colorable is trivial.
Here, $4$-colorability cannot be improved to $3$-colorability, as deciding if a planar graph is $3$-colorable is a well known NP-complete problem~\cite{planar3col}.

Let us now consider the $k$-colorability for graphs on surfaces of non-zero genus $g$.  By Heawood's formula,
the problem becomes trivial in this setting for
$k=\Omega(g^{1/2})$, however it is actually quite well
understood for all $k\ge 5$.
Recall that a graph $G$ is \emph{$(k+1)$-critical} if
all proper subgraphs of $G$ are $k$-colorable, but $G$ itself is not.
Thus, $(k+1)$-critical graphs are exactly the minimal forbidden subgraphs for $k$-colorability.
A deep result of Thomassen~\cite{THOMASSEN} says that for any fixed surface $\Sigma$,
there are only finitely many $(k+1)$-critical graphs for $k \geq 5$.  Thus, we can test whether a graph drawn
on $\Sigma$ is $k$-colorable simply by checking whether
it contains one of this finite list of forbidden $(k+1)$-critical graphs as a subgraph (this can be done in
linear time by the result of Eppstein~\cite{subgraphs}). 

Unfortunately, we cannot extend this result to
$4$-colorablity---it is known that there are infinitely many
$5$-critical graphs on any surface other than the sphere~\cite{THOMASSEN}. This
is a consequence of the folowing result of Fisk~\cite{Fisk78}:
If $G$ is a triangulation of an orientable surface and $G$ has exactly
two vertices $u$ and $v$ of odd degree, then $u$ and $v$ must have the same color
in any 4-coloring of $G$, and thus the graph $G+uv$ is not 4-colorable.
Even though there are infinitely many $5$-critical graphs which embed on any surface of non-zero genus, it is an important open question if
for any fixed surface $\Sigma$, there is a polynomial time algorithm to decide if
a graph drawn on $\Sigma$ is $4$-colorable. 

The case of $3$-coloring triangle-free graphs on surfaces may shed some light on this problem. It is known that there are
infinitely many $4$-critical triangle-free graphs on all surfaces other than
the plane, yet there is a linear time algorithm to decide if a triangle-free
graph on any fixed surface is $3$-colorable~\cite{3colalgorithm}.
The algorithm consists of two parts: In the first part, the problem is
reduced to (near) quadrangulations~\cite{trfree4}, and the second part
gives a topological criterion for 3-colorability of near-quadrangulations~\cite{trfree6}.

Our hope is that (near) Eulerian triangulations could play the same
intermediate role in the case of 4-colorability of graphs on surfaces.
Indeed, there are a number of arguments and analogies supporting this idea:
\begin{itemize}
\item[(A)] The only constructions of ``generic'' (e.g., avoiding non-trivial small separations)
non-4-colorable graphs drawn on a fixed surface that we are aware of are based on
near Eulerian triangulations, such as Fisk's construction~\cite{Fisk78}
or adding vertices to faces of non-$3$-colorable quadrangulations of the projective plane~\cite{Mohar2002ColoringET}.
 
\item[(B)] As mentioned quadrangulations play a key role in the problem of 3-colorability of triangle-free
graphs on surfaces. Many results on coloring quadrangulations of surfaces correspond to results for Eulerian triangulations. For example:
\begin{itemize}
    \item Planar quadrangulations are $2$-colorable, and analogously, planar Eulerian triangulations are $3$-colorable, and in fact characterize $3$-colorability of planar graphs.
    \item Famously, Youngs~\cite{Youngs} proved that a graph drawn in the projective plane so that all faces have even
length is 3-colorable if and only if it does not contain a non-bipartite quadrangulation as a subgraph. Compare this to the fact that for Eulerian triangulation $T$ of the projective plane, Fisk~\cite{fisk} showed that $T$ has an independent set $U$ such that
all faces of $T-U$ have even length, and Mohar~\cite{Mohar2002ColoringET} proved that $T$ is 4-colorable if and only if $T-U$ is 3-colorable.
    \item Hutchinson~\cite{locplanq} proved that every graph drawn on a fixed orientable surface with only even-length faces
and with sufficiently large \emph{edge-width} (the length of the shortest non-contractible cycle) is 3-colorable,
and Nakamoto et al.~\cite{NakNegOta} and Mohar and Seymour~\cite{MohSey} have shown that such graphs on non-orientable surfaces are 3-colorable
unless they contain a quadrangulation with an odd-length orienting cycle.  Analogously,
for any orientable surface, any Eulerian triangulation with sufficiently large edge-width is 4-colorable~\cite{hutcheuler},
and for non-orientable surfaces, the only non-4-colorable Eulerian triangulations of large edge-width are those that
have an independent set whose removal results in an even-faced non-3-colorable graph~\cite{nakaeuler}
\end{itemize}

\end{itemize}

In this paper, we make the first step towards towards the design of a poly\-no\-mial-time algorithm
to decide whether an Eulerian triangulation of a fixed surface is $4$-colorable. In particular,
we give the following algorithm for \emph{planar near-Eulerian-triangulations}, i.e., plane graphs where all the faces except possibly for the outer one have length three and
all the vertices not incident with the outer face have even degree.
\begin{theorem}\label{precol3}
For every integer $\ell$, there is a linear-time algorithm that given
\begin{itemize}
\item a planar near-Eulerian-triangulation $G$ with the outer face bounded by a cycle $C$ of length at most $\ell$ such that all vertices of $C$ have \emph{odd degree} in $G$, and
\item a precoloring $\varphi$ of the vertices incident with the outer face of $G$ using \emph{only three colors},
\end{itemize}
decides whether $\varphi$ extends to a 4-coloring of $G$.
\end{theorem}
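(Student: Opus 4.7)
The plan is to cone off the outer face of $G$ to reduce the problem to $4$-coloring an Eulerian planar triangulation, extract a topological invariant of $\varphi$, and then argue by induction.  Since all interior vertices have even degree, the sum $\sum_{v \in V(C)} \deg_G(v)$ is even; together with the hypothesis that every vertex of $C$ has odd degree, this forces $|C|$ to be even.  Add a new vertex $v^*$ in the outer face of $G$ adjacent to every vertex of $C$, and let $G'$ be the resulting planar triangulation.  Each vertex of $V(C)$ gains exactly one new neighbor (so becomes even-degree), interior vertices are unchanged, and $v^*$ has even degree $|C|$; hence $G'$ is an Eulerian planar triangulation, and by Heawood's theorem has a $3$-coloring $\chi$ unique up to permutation of the colors.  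An extension of $\varphi$ to a $4$-coloring of $G$ is exactly a $4$-coloring $\psi'$ of $G'$ with $\psi'(v^*) = 4$ (the color missing from $\varphi$) and $\psi'|_{V(C)} = \varphi$.

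Using Tait's correspondence between $4$-colorings of planar triangulations and proper $3$-edge-colorings of the cubic planar dual, such a $\psi'$ is the same as extending the prescribed dual colors on the edges incident with the dual vertex of $v^*$ (i.e., the dual edges of $C$) to a proper $3$-edge-coloring.  I would define the topological invariant of $\varphi$ by viewing $\varphi$ as a closed walk around the triangle $K_3$ on vertex set $\{1,2,3\}$ and taking its winding number; equivalently, for each unordered pair $\{a,b\}\subseteq\{1,2,3\}$, count the edges of $C$ with endpoints coloured $\{a,b\}$.  A Fisk-style parity argument (every bichromatic Kempe subgraph in a $4$-coloring of a planar triangulation is an Eulerian subgraph on the sphere, so intersects $C$ in an even number of vertices) shows that these counts must satisfy a specific parity relation depending on $G$; this is the necessary topological condition the theorem is predicated on.

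Sufficiency I would prove by induction on $|V(G)|$, with a base case of constantly many vertices handled by brute force.  For the inductive step, if $C$ has a chord $uv$, split $G$ along $uv$ into two smaller planar near-Eulerian-triangulations $G_1,G_2$, choose the colour on $uv$ so that the invariant distributes compatibly between the two sides, and invoke induction on each.  If $C$ has no chord, use the lower bound on interior degrees (every interior vertex has degree at least $4$, being even and positive) together with Euler's formula to locate a reducible local configuration near $C$, such as an interior vertex of degree $4$ with a neighbor on $C$, and perform a local modification (a vertex contraction, edge-flip, or triangle replacement) that strictly decreases $|V(G)|$ while preserving the near-Eulerian-triangulation structure, the odd-degree condition on the new boundary, the three-color precoloring, and the value of the invariant.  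Combined with an $O(\ell)$-time computation of the invariant, this yields a linear-time algorithm.

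The hardest step will be the chord-free case of the inductive reduction.  When $C$ has no chord, any local modification near $C$ can alter the parity of a boundary vertex, so the reducible configuration must be chosen so that an even number of such parities are flipped and simultaneously that the invariant is preserved.  I expect the technical heart of the proof to be a careful case analysis of low-degree interior vertices adjacent to $C$ (using the even-degree condition in the interior to ``absorb'' the parity changes), and verifying that in every case one of the available local reductions both shrinks $G$ and respects all four structural constraints.
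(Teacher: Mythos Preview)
Your coning construction and use of the unique 3-coloring $\chi$ of the resulting Eulerian triangulation are on the right track; the paper does essentially the same thing, calling $\chi$ the \emph{hue} of $G$. The genuine gap is the claim that extendability is decided by a parity or winding-number invariant of $\varphi$ along $C$. It is not: the paper shows (Corollary~\ref{cor-eq3}) that the problem is equivalent to extending a 3-precoloring of $C$ in the bipartite planar subgraph $H\subseteq G$ induced by the vertices whose hue differs from the ``central'' hue $c$ (equivalently, $\chi(v)\neq\chi(v^*)$ in your notation). That 3-precoloring-extension problem is genuinely nontrivial; its obstructions are structural features of $H$ relative to $\varphi$ (for instance the ``shortcuts'' of Lemma~\ref{lemma-noshort} when $H$ is a near-quadrangulation), not just parities of colour-pair counts on $C$. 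Your inductive reducible-configuration programme would therefore have to reprove, from scratch, something of the strength of the Dvo\v{r}\'ak--Kr\'al'--Thomas linear-time algorithm for 3-precoloring extension in triangle-free planar graphs, and the ``hardest step'' you flag is not a finite case analysis that can be closed in the way you describe.

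The paper's route avoids all of this. A 4-coloring of a hued patch determines a homomorphism to the infinite dappled triangular grid $\grid$ (Theorem~\ref{thm-main}). Under the hypotheses of Theorem~\ref{precol3} (odd-degree boundary, only three colours on $C$), the image of $C$ lies in a single \emph{hexagon} $X$ of $\grid$, the closed neighbourhood of one vertex. The key lemma (Lemma~\ref{lem-retract}) is that every hexagon is a retract of the hued grid $\grid^-$. Composing an extension $G\to\grid$ with the retraction $\grid^-\to X^-$ shows that if $\varphi$ extends at all, it extends by a 4-coloring in which every vertex of hue $c$ receives the central colour $k$; deleting those vertices leaves exactly the bipartite graph $H$, and the question becomes 3-precoloring extension in $H$. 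One then invokes the existing linear-time algorithm of Dvo\v{r}\'ak, Kr\'al' and Thomas for this problem (Corollary~\ref{cor-algsing}), rather than redeveloping it.
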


The motivation for considering the special case of planar near-Eulerian-triangulations with precolored outer face comes from the general approach
towards solving problems for graphs on surfaces, which can be seen e.g. in~\cite{trfree6,trfree2, trfree3, rs6,rs7,Th2,thomassen-surf}
and many other works and is explored systematically in the hyperbolic theory of Postle and Thomas~\cite{Hyperbolicfamilies}.
The general outline of this approach is as follows:
\begin{itemize}
\item Generalize the problem to surfaces with boundary, with the boundary vertices precolored (or otherwise constrained).
\item Use this generalization to reduce the problem to ``generic'' instances (e.g., those without short non-contractible
cycles, since if an instance contains a short non-contractible cycle, we can cut the surface along the cycle and try
to extend all the possible precolorings of the cycle in the resulting graph drawn in a simpler surface).
\item The problem is solved in the basic case of graphs drawn in a disk and on a cylinder
(plane graphs with one or two precolored faces).
\item Finally, the general case is solved with the help of the two basic cases by reducing it to the basic cases by further cutting
the surface and carefully selecting the constraints on the boundary vertices~\cite{trfree6, rs7}, or
using quantitative bounds from the basic cases to show that truly generic cases do not actually arise~\cite{Hyperbolicfamilies}.
\end{itemize}
Thus, Theorem~\ref{precol3} is a step towards solving the basic case of graphs drawn in a disk.
It unfortunately does not solve this case fully because of the extra assumption that $\varphi$ only uses three colors
(and to a lesser extent due to the assumption that vertices of $C$ have odd degree).
Without this extra assumption, we were able to solve the problem when the precolored outer face has length at most five.
\begin{theorem}\label{precolfive}
There is a linear-time algorithm that given
\begin{itemize}
\item a planar near-Eulerian-triangulation $G$ with the outer face bounded by a cycle $C$ of length at most five and
\item a precoloring $\varphi$ of $C$
\end{itemize}
decides whether $\varphi$ extends to a 4-coloring of $G$.
\end{theorem}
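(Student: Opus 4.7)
The plan is a case analysis on $|C|\in\{3,4,5\}$ combined with the coloring pattern induced by $\varphi$ on $V(C)$, aiming to reduce to Theorem~\ref{precol3} wherever possible. The case $|C|=3$ is immediate: $C$ is a triangle, so $\varphi$ uses three distinct colors, and by the Four Color Theorem $G$ admits some $4$-coloring $\psi$. Since both $\varphi|_{V(C)}$ and $\psi|_{V(C)}$ are injections from a three-element set into $\{1,2,3,4\}$, some permutation $\pi\in S_4$ satisfies $(\pi\circ\psi)|_{V(C)}=\varphi$, and the algorithm always answers \emph{yes}.

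For $|C|\in\{4,5\}$, I would classify $\varphi$ by its pattern on $V(C)$ (a constant number of patterns since $|C|$ is bounded) and, for each, reduce via a bounded number of $O(1)$-size local modifications of $G$ to a collection of instances satisfying the hypotheses of Theorem~\ref{precol3}. Two kinds of modifications are necessary. A \emph{parity adjustment}: if some $v\in V(C)$ has even degree, attach a small gadget in the outer face near $v$ (for instance, subdivide an incident edge of $C$ and add a triangle) that flips the parity of $\deg(v)$ while preserving the set of extensions of $\varphi$. A \emph{color-count reduction}: if $\varphi$ uses all four colors on $V(C)$, add an outer diagonal or identify a pair of opposite boundary vertices so that the precoloring on the new outer cycle uses only three colors; since $|C|\le 5$ this produces a bounded number of derived instances, at least one of which extends iff the original does. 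Each modification adds $O(1)$ to $|V(G)|+|E(G)|$ and only a bounded number are applied per instance, preserving linear time.

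The most delicate case is $|C|=4$ with $\varphi$ of pattern $(1,2,3,4)$: every color already appears on $V(C)$, so no helper vertex can be added to the outer face, and no boundary vertex can be recolored to reduce to three colors. The key tool here is a planarity duality for Kempe chains: in any $4$-coloring of $G$ extending $(1,2,3,4)$, by the Jordan curve theorem exactly one of ``$v_1,v_3$ lie in the same $(1,3)$-Kempe component of $G$'' and ``$v_2,v_4$ lie in the same $(2,4)$-Kempe component of $G$'' can hold. This dichotomy leads to a decision procedure via two smaller subproblems, each corresponding to an identification of opposite boundary vertices; making each subproblem into a proper near-Eulerian-triangulation with three-colored boundary amenable to Theorem~\ref{precol3}, while respecting parity and planarity, is the main technical obstacle.
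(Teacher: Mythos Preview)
Your proposal is a plan rather than a proof, and you say so yourself: the sentence ``making each subproblem into a proper near-Eulerian-triangulation with three-colored boundary amenable to Theorem~\ref{precol3}, while respecting parity and planarity, is the main technical obstacle'' concedes that the central case remains open. Beyond this, the ingredients themselves have gaps. The parity gadget you sketch (subdivide a boundary edge of a triangle and re-triangulate) changes the degree of the third vertex of that triangle by one; if that vertex is internal, the near-Eulerian-triangulation property is destroyed. The Kempe argument gives only one implication: from an extension of $(1,2,3,4)$ one can always Kempe-swap to an extension of $(1,2,1,4)$ or of $(1,2,3,2)$, but the converse fails in general, since in an extension of $(1,2,1,4)$ the vertices $v_1$ and $v_3$ may lie in the same $(1,3)$-component and no swap produces $(1,2,3,4)$. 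Finally, identifying opposite vertices of the outer $4$-cycle does not yield a patch with outer face bounded by a cycle, so the resulting instances are not of the form required by Theorem~\ref{precol3}.

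The paper takes a completely different route that bypasses all of this. It does not reduce Theorem~\ref{precolfive} to Theorem~\ref{precol3}; instead it proves both simultaneously via the homomorphism framework of Section~\ref{sec-homgrid}. The key observation is that every viable $4$-coloring of a hued cycle of length at most five is a \emph{single-hexagon} coloring: the associated homomorphism to the dappled triangular grid $\grid$ lands inside a single hexagon $X$. Lemma~\ref{lem-retract} then shows $X^-$ is a retract of $\grid^-$, and Corollary~\ref{cor-eq3} converts the $4$-precoloring extension question into a $3$-precoloring extension question in the bipartite planar subgraph on the two non-central hues. That problem is solved in linear time by the algorithm of Dvo\v{r}\'ak, Kr\'al' and Thomas. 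No gadgets, parity corrections, Kempe chains, or case analysis on the colour pattern of $C$ are needed.
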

In general, we were only able to find a necessary topological condition for such an
extension to exist (Lemma~\ref{lemma-topol}).  Moreover, we show that this condition
is sufficient in the special case when $G$ has an independent set $U$ disjoint from $C$
such that all faces of $G-U$ except for the outer one have length four (Lemma~\ref{lemma-noshort}); the discussion in (B) above
justifies the importance of this special case.  We conjecture that using this topological condition
in combination with further ideas, it will be possible to resolve the disk case in full.
\begin{conjecture}
For every positive integer $\ell$, there is a polynomial-time algorithm that given
\begin{itemize}
\item a planar near-Eulerian-triangulation $G$ with the outer face of length at most $\ell$ and
\item a precoloring $\varphi$ of the vertices incident with the outer face
\end{itemize}
decides whether $\varphi$ extends to a 4-coloring of $G$.
\end{conjecture}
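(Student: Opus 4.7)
The plan is to combine the topological obstruction from Lemma~\ref{lemma-topol} with the Fisk--Mohar reduction of 4-colorings of Eulerian triangulations to 3-colorings of associated near-quadrangulations, following the overall scheme of the cases already resolved in the paper. First, check the necessary topological condition on $(G,\varphi)$ from Lemma~\ref{lemma-topol}; if it fails, return ``no''. Second, try to reduce directly to Lemma~\ref{lemma-noshort}: look for an independent set $U\subseteq V(G)\setminus V(C)$ whose removal leaves every inner face of length four. When such $U$ exists, each 4-coloring of $G$ extending $\varphi$ corresponds to a 3-coloring of $G-U$ extending an induced precoloring of $C$, and Lemma~\ref{lemma-noshort} finishes the job.

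The obstruction to this direct reduction is that such a $U$ need not always exist when $C$ is long or interacts badly with the interior, so a divide-and-conquer step is needed. If $G$ contains a cycle $S$ of length bounded by some function of $\ell$ that separates the closed disk into two smaller disks, enumerate the (boundedly many) 4-colorings of $S$, cut $G$ along $S$, and recursively test each resulting pair of instances, checking in particular that the parities along both sides remain consistent with the near-Eulerian condition. Memoization indexed by cut cycles and their colorings should keep the total running time polynomial, as in the ``generic instance'' framework sketched in the paper after Theorem~\ref{precolfive}. The recursion bottoms out in instances with no short separating cycle, i.e., truly ``generic'' near-Eulerian disks.

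For the base case, the plan is to adapt Fisk's construction~\cite{Fisk78} to produce an independent set $U$ disjoint from $C$ such that $G-U$ is a near-quadrangulation; this is routine on closed surfaces, but must here be done while respecting $\varphi$. The main obstacle is precisely this compatibility: the standard choice of $U$ ignores the boundary coloring, while the induced 3-precoloring of $C$ in $G-U$ required by Lemma~\ref{lemma-noshort} is determined by $\varphi$ together with the parity pattern of degrees along $C$. I expect the argument to require a case analysis on this parity pattern, branching into a bounded number of candidate sets $U$ and testing which, if any, yields a valid induced 3-precoloring; Theorem~\ref{precol3} already resolves the subcase where all boundary vertices have odd degree and $\varphi$ uses only three colors, so the task is essentially to remove both restrictions uniformly. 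Reconciling the rigidity of an arbitrary 4-color precoloring on $C$ with the freedom in choosing $U$, in a way that keeps the enumeration polynomial in $|V(G)|$ rather than exponential in $|V(C)|$, is the step I expect to require genuinely new ideas beyond the techniques of Theorems~\ref{precol3} and~\ref{precolfive}.
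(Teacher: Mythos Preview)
This statement is a \emph{conjecture} in the paper, not a theorem; the paper offers no proof of it, and indeed presents it as an open problem motivating the partial results of Theorems~\ref{precol3} and~\ref{precolfive}. There is therefore no ``paper's own proof'' to compare your proposal against.

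Your proposal is not a proof but an outline, and you yourself acknowledge this in the final sentence: the base case of your recursion---handling a ``generic'' near-Eulerian disk with an arbitrary 4-precoloring by choosing a Fisk-type independent set $U$ compatible with $\varphi$---is precisely the missing piece, and you state that it ``require[s] genuinely new ideas beyond the techniques of Theorems~\ref{precol3} and~\ref{precolfive}''. That is an accurate assessment, and it means the proposal does not resolve the conjecture.

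Two concrete obstacles deserve emphasis. First, even when an independent set $U$ with $G-U$ bipartite exists, $G-U$ need not be a near-quadrangulation: it may have internal faces of length six or more, and the paper explicitly notes (in the paragraph following Lemma~\ref{lemma-noshort}) that the shortcut characterization fails already for a single 6-face. So Lemma~\ref{lemma-noshort} does not cover your base case. Second, your divide-and-conquer step cuts along a short cycle $S$, but the two resulting pieces are in general not near-Eulerian-triangulations: vertices on $S$ that were internal in $G$ had even degree, and after cutting their degrees on each side need not have any controlled parity. You note that one must ``check'' this, but checking is not enough---the recursion hypothesis simply does not apply to the pieces, and there is no indication of how to restore the parity structure or how to bound the number of problematic configurations. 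These are exactly the difficulties that leave the conjecture open.
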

A step towards this conjecture would be to prove that the precoloring always extends
if $G$ is ``sufficiently generic''  as described in the following conjecture (for the definition of a viable precoloring, see Section~\ref{sec-homgrid},
where we show that if a precoloring of the outer face of a planar near-Eulerian-triangulation extends
to a 4-coloring of the whole graph, then it must be viable).
\begin{conjecture}
For every positive even integer $\ell$, there exists an integer $d$ such that the following claim holds.
Let $G$ be a planar near-Eulerian-triangulation with the outer face bounded by a cycle $C$ of length $\ell$
and let $U$ be an independent set disjoint from $C$ such that $G-U$ is bipartite. Moreover, suppose
that $G-U$ has a set $S$ of $d$ faces of length at least six such that
\begin{itemize}
\item there is no 4-cycle in $G-U$ separating a face of $S$ from the outer face, and
\item for distinct faces $s_1,s_2\in S$, the distance between $s_1$ and $s_2$ in $G-U$ is at least $d$
and there is no closed walk of length less than $\ell$ in $G-U$ separating both $s_1$ and $s_2$ from
the outer face.
\end{itemize}
Then any viable 4-coloring of $C$ extends to a 4-coloring of $G$.
\end{conjecture}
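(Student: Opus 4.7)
My plan is to reduce the conjecture to Lemma~\ref{lemma-noshort} by arguing that the $d$ faces of $S$ jointly supply enough topological slack to realize any viable precoloring. The picture is that Lemma~\ref{lemma-noshort} handles the rigid near-quadrangulation case, while each long face in $S$ punctures that rigidity at a different location; with $d$ punctures, well separated so that their effects are independent, the combined set of reachable boundary colorings should exhaust the full space of viable colorings of $C$.

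First, I would transfer the problem to a 3-coloring extension problem on $G-U$ via the Fisk--Mohar correspondence: assigning the fourth color to every vertex of $U$ turns a 4-coloring of $G$ into a proper 3-coloring of the bipartite plane graph $G-U$ whose restriction to the outer cycle is determined by $\varphi$. Under this reduction, the viability criterion of Section~\ref{sec-homgrid} should translate to a holonomy condition on 3-colorings of $G-U$: the 3-coloring must realize the same homological data around $C$ as a suitable homomorphism to the triangular grid $\grid$. One has to verify that this reduction does not lose information, in particular that every viable $\varphi$ does arise from some valid choice of fourth-color placements on $U$.

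Second, I would analyze each $s \in S$ as a local source of flexibility. Because $s$ has length at least six and is not separated from the outer face by a 4-cycle, one can cap $s$ by several combinatorially inequivalent quadrangulations inside $s$, each shifting the local holonomy of any extending 3-coloring by a prescribed amount. The distance-at-least-$d$ condition and the absence of short separating walks between distinct elements of $S$ then guarantee that the local modifications at different $s_i$'s act on disjoint regions of $G-U$ and their holonomy contributions add independently. Hence the set of globally achievable holonomies grows with $d$, and for $d$ large enough in terms of $\ell$ it surjects onto the (finite) set of classes permitted by viability.

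Finally, given a viable $\varphi$, I would choose the capping gadget at each $s \in S$ so that the total holonomy contribution matches the viability invariant of $\varphi$, thereby reducing to a near-quadrangulation to which Lemma~\ref{lemma-noshort} applies, and extract the desired 4-coloring. The main obstacle will be the second step: quantifying precisely how much holonomy a single face of length at least six can absorb, and showing that $d$ independent contributions span the full obstruction group. This is exactly where a Postle--Thomas ``hyperbolic family'' style estimate, adapted from near-quadrangulations to near-Eulerian-triangulations through the Mohar correspondence, appears indispensable, and is what should pin down the required value of $d$ as a function of $\ell$.
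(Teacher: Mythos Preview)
The statement you are attempting is a \emph{conjecture}; the paper offers no proof, so there is nothing to compare against. What I can do is point out where your sketch breaks down as an attack on the open problem.

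Your first reduction is the main casualty. Assigning a single ``fourth color'' to every vertex of $U$ and passing to a $3$-coloring of $G-U$ is exactly the single-hexagon mechanism of Corollary~\ref{cor-eq3}: it works only when $\varphi$ already avoids one color on $C$ (equivalently, maps into one hexagon of $\grid$). The conjecture is formulated precisely to cover arbitrary viable $\varphi$, which may use all four colors on $C$; for such $\varphi$ no $3$-coloring of $G-U$ can restrict to $\varphi$, so the reduction cannot even get started. What one actually needs is a $4$-coloring of $G-U$ whose restriction to the boundary of every internal face omits a color; that is a genuinely different problem, and your ``Fisk--Mohar correspondence'' does not supply it.

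The second step also misreads the setup. The graph $G$ is fixed: every face $s\in S$ of $G-U$ is already filled in $G$ by a single vertex of $U$ joined to the whole boundary of $s$, and you are not free to ``cap $s$ by several combinatorially inequivalent quadrangulations''. The only freedom at $s$ is the colour you give that one vertex of $U$. Moreover, even if you could manufacture a near-quadrangulation $H$ and arrange null-homotopy of $\varphi$ on $C$, Lemma~\ref{lemma-noshort} still requires that $H$ have no $\varphi$-shortcuts, a metric condition you never address; nothing in the hypotheses on $S$ prevents a short generalized chord of $C$ from living far away from every face of $S$. Finally, your ``obstruction group'' picture conflates two different levels: viability is already the statement that the relevant closed walk in $\grid$ exists, and the residual obstruction (null-homotopy plus absence of shortcuts) is not naturally an abelian group that independent local contributions could surject onto. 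As written, the plan does not survive past the first paragraph.
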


The structure of the paper and outline of the proofs are as follows. In Section~\ref{sec-homgrid},
we prove that the precoloring extension problem is equivalent to finding
color-preserving homomorphisms to the infinite triangular grid. In
Section~\ref{sec-hexa} we prove a common generalization of Theorems~\ref{precol3} and \ref{precolfive}
by applying an old result of Hell~\cite{hellretraction} to the triangular grid.
In Section~\ref{sec-homot}, we describe the necessary topological condition 
for precoloring extension in planar near-Eulerian-triangulations, and show that this condition is
sufficient in the case where the near-Eulerian-triangulation arises from a near-quadrangulation as described above.

\section{Homomorphisms to triangular grid}\label{sec-homgrid}

A \emph{patch} is a connected plane graph with all faces of length three except possibly for
the outer face.  A vertex of a patch is \emph{internal} if it is not incident with the outer face.
Let us note the following standard observation.
\begin{observation}\label{obs-3coreu}
A patch $G$ is 3-colorable if and only if all internal vertices of $G$ have even degree,
i.e., if $G$ is a near-Eulerian-triangulation.
\end{observation}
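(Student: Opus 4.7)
The forward implication is direct. Fix a $3$-coloring $\varphi$ and an internal vertex $v$ of degree $d$. Since every face incident with $v$ is a triangle, the neighbors of $v$ form a cycle $v_1v_2\cdots v_d$ with $v_iv_{i+1}$ an edge for every $i$ (indices mod $d$). Each $v_i$ must receive a color from the two-element set $\{1,2,3\}\setminus\{\varphi(v)\}$, and consecutive $v_i$ receive distinct colors, so the colors alternate along the cycle $v_1v_2\cdots v_d$; hence $d$ is even.

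For the converse I would proceed by induction on $|V(G)|$. The base case is a triangulated polygon (no internal vertex): such a patch admits an ear decomposition, so one can $3$-color greedily, since each newly added vertex has exactly two already-colored neighbors, which are themselves adjacent and thus bear distinct colors. For the inductive step, I would first look for a splitting configuration: either a chord $uv$ of the outer cycle $C$, or a separating triangle $T$. Cutting $G$ along such a configuration produces two smaller patches $G_1,G_2$; internal vertices of $G_i$ retain their original degrees in $G$, so both $G_i$ remain near-Eulerian-triangulations. Apply the induction hypothesis to each, and then permute the color classes of $G_2$ so that its coloring agrees with that of $G_1$ on the shared vertices — possible because the shared vertices induce an edge or a triangle, whose $3$-coloring is determined up to a permutation of $\{1,2,3\}$.

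The remaining case is when $G$ has no outer chord and no separating triangle. Here a short counting argument (Euler's formula, combined with every internal degree being even and at least $4$) forces the existence of an internal vertex $v$ of degree exactly $4$ whose cyclic neighborhood $abcd$ contains no chord inside it. Delete $v$ and identify $a$ with $c$: the result is a smaller patch that is still near-Eulerian, and a $3$-coloring of it lifts to a $3$-coloring of $G$ by assigning $v$ the unique color absent from $\{a,b,c,d\}$.

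The main obstacle lies in this last case: one must verify that the identification yields a simple, planar patch whose internal degrees remain even, and that such a reducible configuration is unavoidable whenever both splitting reductions fail. Since the authors label the statement an \emph{Observation} and use it as a springboard for the homomorphism-to-$\grid$ theory developed in Section~\ref{sec-homgrid}, I expect them to avoid this case analysis entirely and instead obtain the converse from a structural description of $3$-colorings as homomorphisms that ``unfold'' locally through the triangular grid, where the parity condition at each internal vertex emerges naturally as the only obstruction.
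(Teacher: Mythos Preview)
Your forward direction is correct. For the converse, however, your third case contains a genuine error: the existence of an internal vertex of degree exactly~$4$ is \emph{not} forced by Euler's formula. A hexagonal piece of the triangular grid---all internal vertices of degree~$6$, boundary vertices of degree~$3$ or~$4$---is a near-Eulerian-triangulation with no chord of the outer cycle, no separating triangle, and no internal vertex of degree~$4$. The counting you allude to only yields that the \emph{average} boundary degree is below~$4$; this is perfectly compatible with every internal degree being~$6$. So the reduction you propose is simply unavailable in general, and the inductive argument does not close.

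Your guess about the paper's method is also off. The authors do not invoke the grid-homomorphism machinery here; that theory comes later and already \emph{presupposes} a $3$-coloring (the ``hue''). Instead they use a two-line doubling trick: first reduce to the $2$-connected case by handling blocks separately, then glue two copies of $G$ along the boundary cycle $C$ to obtain a plane triangulation $G'$. A boundary vertex of degree $d$ in $G$ has degree $2+2(d-2)=2d-2$ in $G'$ (the two $C$-edges are shared, the $d-2$ interior edges are doubled), which is always even; hence $G'$ is Eulerian precisely when every internal vertex of $G$ has even degree. Now cite the classical fact that a plane triangulation is $3$-colorable if and only if it is Eulerian, and observe that $G$ is $3$-colorable if and only if $G'$ is. This bypasses any search for reducible configurations.
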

\begin{proof}
Without loss of generality, we can assume that $G$ is 2-connected, as otherwise
we can $3$-color each 2-connected block of $G$ separately and combine the colorings  (after possibly permuting the colors) to obtain a $3$-coloring of $G$. It follows that the outer face of $G$ is bounded by a cycle. Let $G'$ be the plane triangulation obtained by taking two copies of $G$ and identifying
the corresponding vertices on the outer face.  Clearly $G$ is $3$-colorable if and only if $G'$ is
3-colorable, and it is well-known that a plane triangulation is $3$-colorable if and only if it is
Eulerian.
\end{proof}
Note that if the patch is $2$-connected, then its 3-coloring is unique up to a permutation
of colors, but otherwise there is some ambiguity in the choice of the 3-coloring.
Motivated by this observation, it will be convenient to work with graphs equipped with a fixed 3-coloring.
A \emph{hued graph} is a graph $G$ together with a proper 3-coloring $\psi_G:V(G)\to \mathbb{Z}_3$.  

It will also be often useful to additionally fix a 4-coloring of $G$, and it will be notationally convenient to use the
elements of $\mathbb{Z}_2^2$ as colors.  Hence, a \emph{dappled graph} is a hued graph $G$ together with 
a proper 4-coloring  $\varphi_G:V(G)\to \mathbb{Z}_2^2$.  For a vertex $v\in V(G)$, we say 
that $\psi_G(v)$ is the \emph{hue} and $\varphi_G(v)$ is the \emph{color} of $v$. 
For a hued graph $H$ and a 4-coloring $\theta$ of $H$, let $H^\theta$ denote the corresponding
dappled graph such that $\varphi_{H^\theta}=\theta$.  Conversely, for a dappled graph $G$, let $G^{-}$
denote the hued graph obtained by forgetting the colors of vertices.

\emph{Homomorphisms} of dappled graphs are required to preserve edges and both hue and color, i.e., 
$f:V(G)\to V(H)$ is a homomorphism if $f(u)f(v)\in E(H)$ for every $uv\in E(G)$ and
$\psi_H(f(v))=\psi_G(v)$ and $\varphi_H(f(v))=\varphi_G(v)$ for every $v\in V(G)$.

The \emph{dappled triangular grid} (see Figure~\ref{fig-grid}) is the infinite dappled graph $\grid$ with
\begin{itemize}
\item vertex set $\{(i,j):i,j\in\mathbb{Z}\}$,
\item vertices $(i_1,j_1)$ and $(i_2,j_2)$ adjacent if and only if 
$(i_2-i_1,j_2-j_1)\in\{\pm(1,0),\pm(0,1),\pm(1,1)\}$,
\item hue $\psi_\grid(i,j)=(i+j)\bmod 3$ for each vertex $(i,j)$, and
\item color $\varphi_\grid(i,j)=(i\bmod 2,j\bmod 2)$ for each vertex $(i,j)$.
\end{itemize}

\begin{figure}
\begin{center}
\begin{tikzpicture}
\node[greenvertex] at (-2,1) (v-2) {};
\node[bluevertexv2] at (-1,1) (v-1) {};
\node[greenvertexv3] at (0,1) (v0) {};
\node[bluevertex] at (1,1) (v1) {};
\node[greenvertexv2] at (2,1) (v2) {};
\node[bluevertexv3] at (3,1) (v3) {};
\node[greenvertex] at (4,1) (v4) {};
\node[bluevertexv2] at (5,1) (v5) {};

\node[yellowvertexv3] at (-2,0) (u-2) {};
\node[redvertex] at (-1,0) (u-1) {};
\node[yellowvertexv2] at (0,0) (u0) {};
\node[redvertexv3] at (1,0) (u1) {};
\node[yellowvertex] at (2,0) (u2) {};
\node[redvertexv2] at (3,0) (u3) {};
\node[yellowvertexv3] at (4,0) (u4) {};
\node[redvertex] at (5,0) (u5) {};

\node[greenvertexv2] at (-2,-1) (z-2) {};
\node[bluevertexv3] at (-1,-1) (z-1) {};
\node[greenvertex] at (0,-1) (z0) {};
\node[bluevertexv2] at (1,-1) (z1) {};
\node[greenvertexv3] at (2,-1) (z2) {};
\node[bluevertex] at (3,-1) (z3) {};
\node[greenvertexv2] at (4,-1) (z4) {};
\node[bluevertexv3] at (5,-1) (z5) {};


\node[yellowvertex] at (-2,-2) (t-2) {};
\node[redvertexv2] at (-1,-2) (t-1) {};
\node[yellowvertexv3] at (0,-2) (t0) {};
\node[redvertex] at (1,-2) (t1) {};
\node[yellowvertexv2] at (2,-2) (t2) {};
\node[redvertexv3] at (3,-2) (t3) {};
\node[yellowvertex] at (4,-2) (t4) {};
\node[redvertexv2] at (5,-2) (t5) {};


\node[greenvertexv3] at (-2,-3) (r-2) {};
\node[bluevertex] at (-1,-3) (r-1) {};
\node[greenvertexv2] at (0,-3) (r0) {};
\node[bluevertexv3] at (1,-3) (r1) {};
\node[greenvertex] at (2,-3) (r2) {};
\node[bluevertexv2] at (3,-3) (r3) {};
\node[greenvertexv3] at (4,-3) (r4) {};
\node[bluevertex] at (5,-3) (r5) {};

\draw[thick,black] (u-2)--(u-1)--(u0)--(u1)--(u2)--(u3)--(u4)--(u5);
\draw[thick,black] (v-2)--(v-1)--(v0)--(v1)--(v2)--(v3)--(v4)--(v5);
\draw[thick,black] (z-2)--(z-1)--(z0)--(z1)--(z2)--(z3)--(z4)--(z5);
\draw[thick,black] (t-2)--(t-1)--(t0)--(t1)--(t2)--(t3)--(t4)--(t5);
\draw[thick,black] (r-2)--(r-1)--(r0)--(r1)--(r2)--(r3)--(r4)--(r5);

\draw[thick,black] (v-2)--(u-2)--(z-2)--(t-2)--(r-2);
\draw[thick,black] (v-1)--(u-1)--(z-1)--(t-1)--(r-1);
\draw[thick,black] (v0)--(u0)--(z0)--(t0)--(r0);
\draw[thick,black] (v1)--(u1)--(z1)--(t1)--(r1);
\draw[thick,black] (v2)--(u2)--(z2)--(t2)--(r2);
\draw[thick,black] (v3)--(u3)--(z3)--(t3)--(r3);
\draw[thick,black] (v4)--(u4)--(z4)--(t4)--(r4);
\draw[thick,black] (v5)--(u5)--(z5)--(t5)--(r5);

\draw[thick,black] (v-1)--(u-2);
\draw[thick,black] (v0)--(u-1)--(z-2);
\draw[thick,black] (v1)--(u0)--(z-1)--(t-2);
\draw[thick,black] (v2)--(u1)--(z0)--(t-1)--(r-2);
\draw[thick,black] (v3)--(u2)--(z1)--(t0)--(r-1);
\draw[thick,black] (v4)--(u3)--(z2)--(t1)--(r0);
\draw[thick,black] (v5)--(u4)--(z3)--(t2)--(r1);
\draw[thick,black] (u5)--(z4)--(t3)--(r2);
\draw[thick,black] (z5)--(t4)--(r3);
\draw[thick,black] (t5)--(r4);

\draw[thick,dashed,blue] (r0)--(t2)--(z4)--(u3)--(v2)--(u0)--(z-2)--(t-1)--(r0);

\end{tikzpicture}
\caption{A portion of the dappled triangular grid~$\grid$. The hues of the vertices are represented by their shapes.
The dashed line indicates toroidal drawing of the graph $C_3\times C_4$ for which $\grid$ is the universal cover.}
\label{fig-grid}
\end{center}
\end{figure}
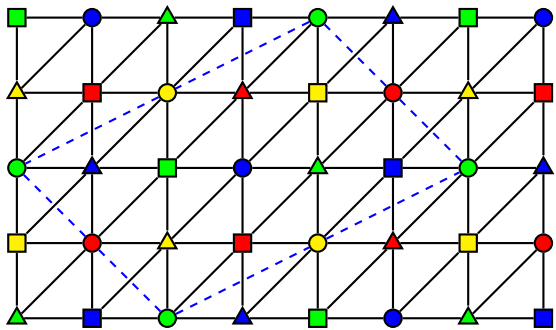

Let us remark that since $\grid$ is a triangulation, the hue is unique up to permutation
of colors, while the choice of the 4-coloring is more deliberate.  In particular,
the following claim follows by an inspection of the definition.
\begin{observation}\label{obs-homtot}
Let $\grid$ be the dappled triangular grid.  For any vertex $v\in V(\grid)$,
if $u_1$ and $u_2$ are distinct neighbors of $v$, then
$(\psi_\grid(u_1),\varphi_\grid(u_1))\neq(\psi_\grid(u_2),\varphi_\grid(u_2))$.
Consequently, for any connected dappled graph $G$ and any vertex $x\in V(G)$,
if $f_1,f_2:V(G)\to V(\grid)$ are homomorphisms and $f_1(x)=f_2(x)$, then $f_1=f_{2}$.
\end{observation}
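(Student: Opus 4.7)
The plan is to verify the first claim by a direct enumeration, and then derive the second claim by a standard connectedness/propagation argument along paths in $G$.

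For the first claim, fix a vertex $v=(i,j)\in V(\grid)$. Its six neighbors are obtained by adding to $(i,j)$ one of the vectors in $\{\pm(1,0),\pm(0,1),\pm(1,1)\}$. Partition them by hue: since $\psi_\grid$ depends on $i+j\bmod 3$, the three neighbors $(i{+}1,j)$, $(i,j{+}1)$, $(i{-}1,j{-}1)$ all have hue $\psi_\grid(v)+1\pmod 3$, and the other three neighbors $(i{-}1,j)$, $(i,j{-}1)$, $(i{+}1,j{+}1)$ all have hue $\psi_\grid(v)-1\pmod 3$. Within each hue class, I would just check the colors: writing $(a,b)=\varphi_\grid(v)\in\mathbb{Z}_2^2$, the three same-hue neighbors in the first class have colors $(a{+}1,b)$, $(a,b{+}1)$, and $(a{+}1,b{+}1)$, which are pairwise distinct; the same computation handles the other hue class. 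Neighbors from different hue classes are automatically distinguished by their hues. Hence all six pairs $(\psi_\grid,\varphi_\grid)$ are distinct, which proves the first part.

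For the second claim, suppose $f_1,f_2\colon V(G)\to V(\grid)$ are homomorphisms of dappled graphs with $f_1(x)=f_2(x)$. Let $S=\{u\in V(G):f_1(u)=f_2(u)\}$. I would show $S$ is closed under taking neighbors in $G$: if $u\in S$ and $uv\in E(G)$, then both $f_1(v)$ and $f_2(v)$ are neighbors of the common vertex $f_1(u)=f_2(u)$ in $\grid$, and both satisfy $\psi_\grid(f_i(v))=\psi_G(v)$ and $\varphi_\grid(f_i(v))=\varphi_G(v)$. By the first part, $f_1(v)=f_2(v)$, so $v\in S$. Since $x\in S$ and $G$ is connected, $S=V(G)$, i.e.\ $f_1=f_2$.

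There is no genuine obstacle here; the only thing that requires care is the case analysis in the first part, specifically checking that the three neighbors within each hue class really do take the three distinct colors other than $\varphi_\grid(v)$, but this is immediate from the parity computation above. The second part is then a routine propagation argument that uses only the local injectivity established in the first part together with the connectedness of $G$.
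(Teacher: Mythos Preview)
Your proof is correct. The paper does not actually supply a proof of this observation; it simply states that the claim ``follows by an inspection of the definition,'' and your argument is precisely the direct verification and propagation argument that such an inspection amounts to.
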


Let us now give the key property of dappled patches; the proof easily follows from the coloring-flow
duality of Tutte~\cite{tutteduals}, we give the details for completeness.
\begin{theorem}\label{thm-main}
Every dappled patch $G$ has a homomorphism to the dappled triangular grid $\grid$.
\end{theorem}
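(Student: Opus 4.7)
The plan is to pin down a step vector $\delta(u,v)\in\{\pm(1,0),\pm(0,1),\pm(1,1)\}\subset\mathbb{Z}^2$ for each oriented edge $(u,v)$ of $G$, check that these vectors sum to zero around every triangular face, and then integrate to obtain $f$. For each edge $uv$, the endpoints have a nonzero color difference $\varphi_G(v)-\varphi_G(u)\in\mathbb{Z}_2^2$ and a nonzero hue difference $\psi_G(v)-\psi_G(u)\in\mathbb{Z}_3$. By Observation~\ref{obs-homtot}, each vertex of $\grid$ has at most one neighbor of each (hue, color) type, and a brief inspection shows that all six such types do occur; hence the pair of differences forces a unique $\delta(u,v)$. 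Concretely, the color difference picks out the edge type (horizontal, vertical, or diagonal) and the hue difference picks out the sign, using that horizontal and vertical steps change the hue by $\pm 1$ with matching sign while diagonal steps change the hue by $\pm 2\equiv\mp 1\pmod 3$.

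Next, I check that $\sum_{e\in \partial F}\delta(e)=(0,0)$ for each triangular inner face $F=v_0v_1v_2$. The three color differences around $F$ are nonzero elements of $\mathbb{Z}_2^2$ summing to zero, so they are a permutation of $\{(1,0),(0,1),(1,1)\}$, meaning exactly one edge is of each type. Meanwhile, as integers, the three hue differences around $F$ are a cyclic rotation of $(1,1,-2)$ or of $(-1,-1,2)$, depending on the cyclic hue-order of $v_0,v_1,v_2$, and in either case all three residues modulo $3$ coincide. Plugging into the sign rule yields either $(1,0)+(0,1)+(-1,-1)=(0,0)$ or its negation, so the triangle condition holds.

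Finally, since the cycle space of a plane graph is generated by its face boundaries and the signed sum of all face boundaries is zero, the triangle condition implies $\delta$ sums to $(0,0)$ around every closed walk (the outer-face boundary equals the negative of the sum of the triangles, so it too has vanishing total $\delta$). Fixing a base vertex $v_0\in V(G)$ together with a base lift $p_0\in V(\grid)$ satisfying $(\psi_\grid(p_0),\varphi_\grid(p_0))=(\psi_G(v_0),\varphi_G(v_0))$, and setting $f(v):=p_0+\sum_{e\in P}\delta(e)$ along any path $P$ from $v_0$ to $v$, yields a well-defined map that by construction sends each edge of $G$ to an edge of $\grid$ and preserves both hue and color, hence the desired homomorphism. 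The main technical obstacle is the per-triangle sign bookkeeping; the topological reduction from arbitrary closed walks to triangular face boundaries is standard planar-graph cycle-space theory (the Tutte-style coloring/flow duality alluded to above).
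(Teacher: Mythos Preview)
Your proof is correct and follows essentially the same approach as the paper: define an edge-step $\delta(u,v)\in\{\pm(1,0),\pm(0,1),\pm(1,1)\}$ from the hue/color differences, verify $\delta$ vanishes around each triangular face, deduce it vanishes around all closed walks, and integrate from a base vertex. The paper gives an explicit formula for $\delta$ via $\sigma(a,(b,c))=\pm1$ and spells out the inductive verification that $f$ preserves hue and color, whereas you describe $\delta$ qualitatively (color difference picks the axis, hue difference the sign) and assert preservation ``by construction''; both are the same argument in different dress.
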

\begin{proof}
For $a\in\{1,2\}$ and $(b,c)\in \{(0,1),(1,0),(1,1)\}$, let
\begin{align*}
\sigma(a,(b,c))&=\begin{cases}
1&\text{if $a=b+c$}\\
-1&\text{otherwise}
\end{cases}\\
\delta(a,(b,c))&=\sigma(a,(b,c))\cdot (b,c).
\end{align*}
For each pair $u,v\in V(G)$ of adjacent vertices of $G$, let us define
$$\delta(u,v)=\delta(\psi_G(v)-\psi_G(u), \varphi_G(v)-\varphi_G(u)).$$
Here, the subtractions are performed in $\mathbb{Z}_3$ and in $\mathbb{Z}_2^2$, respectively,
and the results are interpreted as integers in $\{1,2\}$ or pairs of integers in $\{(0,1),(1,0),(1,1)\}$.
Note that it is not possible for the differences to be $0$ or $(0,0)$, since $\psi_G$ and $\varphi_G$
are proper colorings.  Clearly $\delta(u,v)=-\delta(v,u)$.  For a walk $W=v_0v_1\ldots v_k$ in $G$, let
$$\delta(W)=\sum_{i=1}^k\delta(v_{i-1},v_i).$$
Observe that for any triangle $Q=v_0v_1v_2v_3$ in $G$ with $v_0=v_3$, there exists $a\in\{1,2\}$
such that 
$$\psi_G(v_i)-\psi_G(v_{i-1})=a\text{ for all $i\in\{1,2,3\}$},$$
and
$$\{\varphi_G(v_i)-\varphi_G(v_{i-1}):i\in\{1,2,3\}\}=\{(0,1),(1,0),(1,1)\}$$
Consequently,
$$\delta(Q)=\delta(a,(0,1))+\delta(a,(1,0))+\delta(a,(1,1))=(0,0).$$
Since this holds for the boundary of every internal face of $G$, we conclude that also for
every closed walk $C$ in $G$, we have $\delta(C)=0$.  Therefore, for any (not necessarily adjacent)
vertices $u,v\in V(G)$, we can define $\delta(u,v)=\delta(W)$ for any walk $W$ from $u$ to $v$,
and the definition does not depend on the choice of $W$.

Let $u$ be an arbitrary vertex of $G$ and let $u'$ be a vertex of $\grid$ such that 
$\psi_G(u)=\psi_\grid(u')$ and $\varphi_G(u)=\varphi_\grid(u')$; such a vertex $u'$ exists, since
$\grid$ contains vertices with all combinations of hue and color.  Let $f:V(G)\to V(\grid)$ be defined
by letting $f(v)=u'+\delta(u,v)$ for each $v\in V(G)$.  It is straightforward to verify that
$f$ is a homomorphism from $G$ to $\grid$:
\begin{itemize}
\item If $xy\in E(G)$, then $f(y)-f(x)=\delta(x,y)$, as seen by considering
a walk $W$ from $u$ to $x$ and the walk from $u$ to $y$ obtained by appending the edge $xy$ at the end of $W$.
Consequently $f(y)-f(x)\in\{\pm(0,1),\pm(1,0),\pm(1,1)\}$, and thus $f(x)$ and $f(y)$ are adjacent in $\grid$.
\item We prove that each vertex $y\in V(G)$ is mapped to a vertex of the same hue and color in $\grid$ by induction
on the distance between $y$ and $u$ in $G$.  If $y=u$, then the claim holds by the choice of $u'$.
Hence, suppose that $y\neq u$ and let $x$ be a neighbor of $y$ on a shortest path from $u$ to $y$ in $G$.
By the induction hypothesis, $\psi_\grid(f(x))=\psi_G(x)$ and $\varphi_\grid(f(x))=\varphi_G(x)$.
Let $a=\psi_G(y)-\psi_G(x)$ and $(b,c)=\varphi_G(y)-\varphi_G(x)$.  Recall that $f(y)-f(x)=\delta(x,y)=\sigma(a,(b,c))\cdot (b,c)$.
By the definition of the hue and coloring of $\grid$, when computing in $\mathbb{Z}_3$, we have
$$\psi_\grid(f(y))-\psi_\grid(f(x))=\sigma(a,(b,c))\cdot (b+c)=\begin{lrcases}
b+c&\text{if $a=b+c$}\\
-(b+c)&\text{if $a=-(b+c)$}
\end{lrcases}=a,$$
and when computing in $\mathbb{Z}_2^2$, we have
$$\varphi_\grid(f(y))-\varphi_\grid(f(x))=(b,c).$$
Therefore,
\begin{align*}
\psi_\grid(f(y))&=\psi_\grid(f(x))+a=\psi_G(x)+a=\psi_G(y)\\
\varphi_\grid(f(y))&=\varphi_\grid(f(x))+(b,c)=\varphi_G(x)+(b,c)=\varphi_G(y).
\end{align*}
\end{itemize}
\end{proof}
Let us remark that another way of interpreting (and proving) Theorem~\ref{thm-main} is as follows: The hue and coloring of $G$ determine a homomorphism from $G$ to the graph $K_{\mathbb{Z}_3}\times K_{\mathbb{Z}_2^2}$, i.e. the categorical product of cliques with $3$ and $4$ vertices\footnote{For graphs $G_1$ and $G_2$,
$G_1\times G_2$ is the graph with vertex set
$V(G_1)\times V(G_2)$ and with $(u_1,u_2)$ adjacent to $(v_1,v_2)$ if and only if $u_1v_1\in E(G_1)$ and $u_2v_2\in E(G_2)$.}.  This graph can be drawn as a triangulation
of the torus, and $\grid$ is its universal cover.

A $4$-coloring $\varphi$ of a connected hued graph $C$ is \emph{viable} if and only if the dappled graph $C^\varphi$ has a homomorphism to the dappled triangular grid.
Let us note that by Observation~\ref{obs-homtot}, this condition is easy to verify, as
such a homomorphism, if it exists, can be found in linear time by extending a partial homomorphism step-by-step. 

\begin{corollary}
Let $G$ be a hued patch and let $\varphi$ be a $4$-coloring of the boundary of the outer face of $G$.
If $\varphi$ extends to a $4$-coloring of $G$, then $\varphi$ is viable.
\end{corollary}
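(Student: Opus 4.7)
The plan is to derive the corollary almost immediately from Theorem~\ref{thm-main} by restricting the produced homomorphism to the outer face. First I would note that since $G$ is a hued patch, it comes equipped with a fixed 3-coloring $\psi_G$, and the boundary $C$ of the outer face inherits this 3-coloring. Thus if $\varphi'$ is a 4-coloring of $G$ extending $\varphi$, then $G^{\varphi'}$ is a dappled patch in the sense defined above, and its restriction to the vertex set of $C$ is precisely the dappled graph $C^\varphi$.

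Next I would invoke Theorem~\ref{thm-main} applied to the dappled patch $G^{\varphi'}$ to obtain a homomorphism $f : V(G) \to V(\grid)$ that preserves edges, hues, and colors. Since every edge, hue, and color of $C^\varphi$ is also an edge, hue, or color of $G^{\varphi'}$, the restriction $f|_{V(C)}$ is automatically a homomorphism from $C^\varphi$ to $\grid$. By definition of viability, this shows that $\varphi$ is viable, completing the proof.

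There is essentially no obstacle here — this corollary is a direct packaging of Theorem~\ref{thm-main}. The only small thing to double-check is that the boundary of the outer face of a patch is itself a well-defined hued graph (i.e., that its induced subgraph structure together with the restriction of $\psi_G$ gives a valid hued graph), which is immediate since $\psi_G$ restricted to any subgraph is still a proper 3-coloring. Thus the proof should fit in just a few sentences.
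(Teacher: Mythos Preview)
Your proposal is correct and matches the paper's approach: the corollary is stated without proof in the paper precisely because it is an immediate consequence of Theorem~\ref{thm-main} obtained by restricting the homomorphism to the boundary, exactly as you describe.
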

This condition is topological in nature; another way how to view it is as follows:  Recall that
the dappled triangular grid $\grid$ is the universal cover of the dappled graph $K_{\mathbb{Z}_3}\times K_{\mathbb{Z}_2^2}$ drawn on the torus.
A 4-coloring of a hued cycle $C$ is viable if and only if the corresponding homomorphism to $K_{\mathbb{Z}_3}\times K_{\mathbb{Z}_2^2}$
maps $C$ to a contractible closed walk on the torus.

Note that viability is a necessary but of course not sufficient condition.  However, it
is the strictest possible condition based only on the coloring and hue of the boundary of
the outer face, in the following sense.

\begin{lemma}\label{lemma-exists}
If $C$ is a dappled cycle with $\varphi_C$ viable, then there exists a dappled patch $G$ with
the outer face bounded by a dappled cycle isomorphic to $C$.
\end{lemma}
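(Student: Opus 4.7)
My plan is to use viability to fix a homomorphism $f\colon C^{\varphi}\to\grid$ and then construct the patch $G$ by ``filling in'' the closed walk $f(C)$ inside the planar triangulation $\grid$. The key insight is that $\grid$ is a triangulation of the plane, hence simply connected, so the loop $f(C)$ is null-homotopic and should bound a triangulated disk, which will be $G$ (with hue and color pulled back from $\grid$).

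A combinatorial implementation is by induction on $k=|V(C)|$. The base $k=3$ is immediate since $f$ identifies $C$ with a triangular face of $\grid$ and $G:=C$ works. For the inductive step, my main move is the \emph{ear reduction}: if three consecutive vertices $v_{i-1},v_i,v_{i+1}$ of $C$ satisfy $f(v_{i-1})\neq f(v_{i+1})$ with $f(v_{i-1})f(v_{i+1})\in E(\grid)$, delete $v_i$ and insert the edge $v_{i-1}v_{i+1}$ to obtain a viable cycle $C'$ of length $k-1$; induction produces a patch $G'$ with outer cycle $C'$, and $G$ is formed by attaching the triangle $v_{i-1}v_iv_{i+1}$ externally along $v_{i-1}v_{i+1}$. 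A \emph{fold reduction} handles a spur $f(v_{i-1})=f(v_{i+1})$ by deleting $v_i,v_{i+1}$ and inserting the edge $v_{i-1}v_{i+2}$ (legitimate since $f(v_{i+1})=f(v_{i-1})$ is adjacent to $f(v_{i+2})$), applying induction, then filling the resulting ``bubble'' quadrilateral with a small ad hoc sub-patch.

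The main obstacle is that these two moves do not cover all configurations of $f$: the walk $f(C)$ can be locally injective (no spur) yet have no consecutive triple of images forming a triangle. The prototype is a length-$6$ cycle in $\grid$ whose image encircles an interior vertex $w\in V(\grid)$; there every consecutive triple of images lies at distance $2$ in $\grid$, so neither an ear nor a fold is available, and one must introduce a new internal vertex of the patch mapped to a vertex of $\grid$ interior to the region bounded by $f(C)$. A suitable progress measure for the induction is then the total number of triangular faces of $\grid$ (counted with multiplicity given by the winding number of $f(C)$) enclosed by $f(C)$, which strictly decreases under each of the three kinds of moves. The cleanest conceptual argument is the topological one: since $\grid$ triangulates the simply connected plane, the loop $f(C)$ bounds a van Kampen-style disk diagram---a triangulated disk together with a simplicial map to $\grid$ whose boundary traces $f(C)$---and this disk, equipped with the pulled-back hue and color, is the desired patch $G$.
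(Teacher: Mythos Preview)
Your approach is genuinely different from the paper's, and the high-level idea (fill in a disk diagram over the simply connected complex~$\grid$) is sound, but the proof as written has real gaps.

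The paper's argument is a short induction on $|C|$ that avoids all local casework. If $f$ is \emph{injective}, it simply takes $G$ to be the finite subgraph of~$\grid$ enclosed by the simple cycle $f(C)$; this single base case already subsumes your hexagon example and every ``locally injective with no ear'' configuration. If $f$ is not injective, the paper picks \emph{any} pair $u\neq v$ with $f(u)=f(v)$, identifies $u$ and $v$ to split $C$ into two strictly shorter blocks $C_1,C_2$, recurses, glues $G_1,G_2$ at the pinch vertex~$z$, and then restores a simple-cycle boundary by an explicit local gadget (a short path $Q$ between the two outer neighbours $x,y$ of~$z$, plus a new vertex~$w$ coned over $Q$, with $w$ given the same hue and color as~$z$). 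The point is that the dichotomy ``$f$ injective~/ not injective'' is global and exhaustive, so no third move is ever needed.

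In your write-up, by contrast, several steps are left open. First, you explicitly flag that ear and fold reductions do not cover the hexagon case, propose an unspecified ``third move'' and an area-based progress measure, but do not carry out that induction (neither the base case nor termination is argued; note that an interior-triangle move would \emph{increase} $|C|$, so lexicographic care is required). Second, the fold reduction needs attention: when $k=4$ it does not produce a cycle, and the ``ad hoc sub-patch'' filling the quadrilateral $v_{i-1}v_iv_{i+1}v_{i+2}$ is not specified---the only legal diagonal $v_iv_{i+2}$ may join vertices with the same hue and color (e.g.\ when $f(v_i)=f(v_{i+2})$), so an internal vertex and a concrete gadget are needed here as well. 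Third, the final appeal to van Kampen diagrams is where the substance is hiding: a disk diagram for the closed walk $f(C)$ is in general a \emph{singular} planar $2$-complex (with boundary spurs and cut vertices when $f(C)$ backtracks or self-touches), not a simple plane graph with a simple boundary cycle. Turning such a diagram into a patch whose outer face is bounded by a cycle isomorphic to~$C$ is precisely the nontrivial step, and the paper's gadget at the pinch vertex is exactly a concrete way to perform this ``desingularisation''. So your plan can be completed, but as stated it stops just short of the part that does the work.
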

\begin{proof}
Let us prove the claim by induction on the length of $C$.  Let $f:V(C)\to V(\grid)$ be a homomorphism
from $C$ to the dappled triangular grid $\grid$ that exists by the viability of $\varphi_C$.
If $f$ is injective, then we can let $G$ be the subgraph of $\grid$ drawn inside the cycle $f(C)$.

Otherwise, there exist distinct vertices $u,v\in V(C)$ such that $f(u)=f(v)$.  Since $f$ is a homomorphism,
this implies that $u$ and $v$ have the same color and hue, and in particular $u$ and $v$ are not adjacent in $C$.
Let $C_1$ and $C_2$ be the 2-connected blocks of the dappled graph obtained from $C$ by
identifying $u$ with $v$ to a single vertex $z$ ($C_1$ and $C_2$ are cycles if the distance between $u$ and $v$ in $C$ is at least three,
and otherwise at least one of them consists of a single edge).  For $i\in \{1,2\}$, the
restriction of $f$ to $C_i$ is a homomorphism to $\grid$, and thus $\varphi_{C_i}$ is viable.
If $C_i$ is just an edge, then let $G_i=C_i$, otherwise let $G_i$ be a dappled patch with
the outer face bounded by a dappled cycle isomorphic to $C_i$, which exists by the induction
hypothesis.  

Let $G'$ be the dappled patch obtained from the disjoint union of $G_1$ and $G_2$
by identifying the vertices corresponding to $z$.  Let $xzy$ be a part of the boundary walk of
the outer face of $G'$.  The dappled patch $G$ is obtained from $G'$ by the following operation,
ensuring that the outer face of $G$ is bounded by a cycle isomorphic to $C$ (see Figure~\ref{fig-combine} for an illustration):
Add a path $Q$ between $x$ and $y$ of length two if $x$ and $y$ have the same hue and of length three
if they have a different hue.  Add a vertex $w$, and edges between $w$ and all vertices of $Q$,
and between $z$ and all vertices of $Q$.  Observe that both $\psi_{G'}$ and $\varphi_{G'}$
can be extended to $G$ so that $w$ has the same hue and color as $z$.
\end{proof}

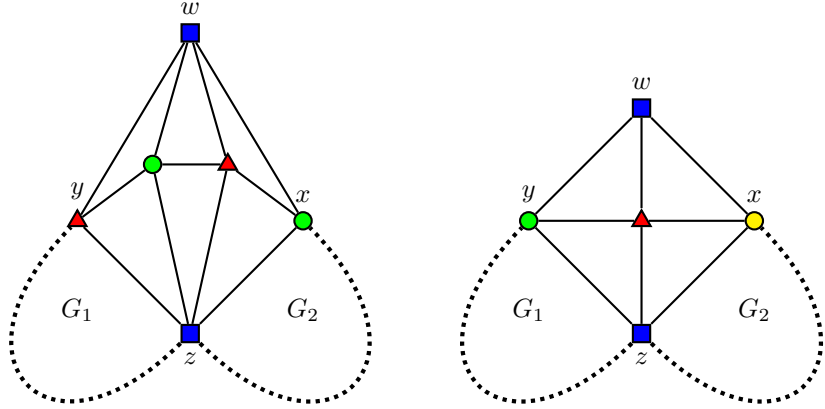
\begin{figure}
\begin{center}
\begin{tikzpicture}
\node[bluevertex] at (0,.5) (z) [label=below:$z$] {};
\node[greenvertexv2] at (1.5,2) (x) [label=above:$x$] {};
\node[redvertexv3] at (-1.5,2) (y) [label=above:$y$] {};
\node[redvertexv3] at (.5,2.75) (x1) {};
\node[greenvertexv2] at (-.5,2.75) (x2) {};
\node[bluevertex] at (0,4.5) (w) [label =above:$w$] {};
\draw[thick,black] (y)--(z)--(x)--(x1)--(x2)--(y);
\draw[thick,black] (z)--(x1);
\draw[thick,black] (z)--(x2);
\draw[thick,black] (w)--(y);
\draw[thick,black] (w)--(x1);
\draw[thick,black] (w)--(x2);
\draw[thick,black] (w)--(x);
\draw[ultra thick,black, dotted, out=225, in=225, looseness = 3] (z) to (y);
\draw[ultra thick,black, dotted, out=-45, in=-45, looseness = 3] (z) to (x);
\node[dummywhite] at (-1.5,.5) (dummy1) [label=above:$G_{1}$] {};
\node[dummywhite] at (1.5,.5) (dummy2) [label=above:$G_{2}$] {};

\begin{scope}[xshift =6cm]
\node[bluevertex] at (0,.5) (z) [label=below:$z$] {};
\node[yellowvertexv2] at (1.5,2) (x) [label=above:$x$] {};
\node[greenvertexv2] at (-1.5,2) (y) [label=above:$y$] {};
\node[redvertexv3] at (0,2) (x1) {};
\node[bluevertex] at (0,3.5) (w) [label =above:$w$] {};
\draw[thick,black] (y)--(z)--(x)--(x1)--(y);
\draw[thick,black] (z)--(x1);
\draw[thick,black] (w)--(y);
\draw[thick,black] (w)--(x1);
\draw[thick,black] (w)--(x);
\draw[ultra thick,black, dotted, out=225, in=225, looseness = 3] (z) to (y);
\draw[ultra thick,black, dotted, out=-45, in=-45, looseness = 3] (z) to (x);
\node[dummywhite] at (-1.5,.5) (dummy1) [label=above:$G_{1}$] {};
\node[dummywhite] at (1.5,.5) (dummy2) [label=above:$G_{2}$] {};
\end{scope}
\end{tikzpicture}
\end{center}
\caption{Construction from the proof of Lemma~\ref{lemma-exists}.  The vertex shapes indicate their hue.
The color choices represent one possible extension, depending on if the hue's of $y$ and $x$ are the same or not.}\label{fig-combine}
\end{figure}

\section{Single-hexagon precolorings}\label{sec-hexa}

In this section we aim to prove a common generalization of Theorems~\ref{precol3} and \ref{precolfive}.
The key observation is that in both cases, the homomorphism to the dappled triangular grid $\grid$
associated with the precoloring $\varphi$ maps $C$ to the neighborhood of a single vertex of $\grid$.

A \emph{hexagon} is a dappled subgraph of $\grid$ induced by a vertex
and its neighbors. A $4$-coloring $\varphi$ of a connected hued graph $C$ is
a \emph{single-hexagon coloring} if it is viable and the corresponding
homomorphism $f$ maps $C^\varphi$ to a subgraph of a hexagon $H$ of $\grid$.
The \emph{central hue} $c$ and the \emph{central color} $k$ of a single-hexagon
coloring is the hue and the color of the central vertex of $H$. Let us remark that a vertex of $C$ has color $k$
if and only if it has hue $c$.  There are two important examples of single-hexagon colorings,
corresponding to the assumptions of Theorems~\ref{precol3} and \ref{precolfive}, respectively.
\begin{itemize}
    \item Let us call a patch (recall a patch is a planar graph where all internal faces are triangles) \emph{odd} if its outer face is bounded by a cycle $C$ and all vertices incident 
     with the outer face have odd degree.  Note that in an odd hued patch, all vertices of $C$
     have one of two values of hue, say $0$ and $1$.  Every coloring of $C$ that uses at most three colors
     is single-hexagon, with the central hue $2$ and central color not appearing on $C$.
    \item Every viable $4$-coloring of a hued $(\le\!5)$-cycle is single-hexagon.
\end{itemize}

A \emph{retract} of a hued graph $G$ is an induced subgraph $H$ of $G$ such that there exists a 
\emph{retraction} $f$ from $G$ to $H$, i.e., a homomorphism such that $f(v)=v$ for each $v\in V(H)$.
Let us note the following key observation, which is a consequence of the fact that each shortest
cycle in a bipartite graph is a retract~\cite{hellretraction}; we include a proof for completeness. See Figure \ref{fig-retract} for a picture of the retraction. Recall that for a dappled graph $G$, the graph $G^{-}$ is obtained from $G$ by forgetting the colors of the vertices.
\begin{lemma}\label{lem-retract}
For every hexagon $X$ of the triangular grid $\grid$, the hued hexagon $X^{-}$ is a retract of $\grid^-$.
\end{lemma}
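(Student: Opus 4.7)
The plan is to reduce the retraction problem to a cycle-retraction problem in a bipartite subgraph of $\grid^-$, and then to invoke the cited theorem of Hell. Let $c$ denote the central vertex of the hexagon $X$ and let $D$ denote the $6$-cycle bounding $X\setminus\{c\}$. The vertex $c$ is the unique vertex of $X$ of hue $\psi_\grid(c)$, so any hue-preserving map from $\grid^-$ to $X^-$ must send every vertex of hue $\psi_\grid(c)$ to $c$. Thus on those vertices the retraction is forced, and it remains only to define it on vertices of the other two hues. Let $B$ be the subgraph of $\grid^-$ induced by those vertices; then $B$ is bipartite (the bipartition being given by the two remaining hues) and contains $D$.

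The key step is to verify that $D$ is a \emph{shortest} cycle in $B$. Bipartiteness of $B$ rules out odd cycles, so it suffices to exclude $4$-cycles. Translating if necessary, we may assume $c=(0,0)$, so $\psi_\grid(c)=0$; then the three $\grid$-neighbours of a hue-$1$ vertex $(i,j)$ that lie in $B$ are $(i+1,j)$, $(i,j+1)$, and $(i-1,j-1)$. The six differences of distinct ordered pairs from $\{(1,0),(0,1),(-1,-1)\}$ are all distinct, so any two vertices in the same part of $B$ share at most one common neighbour in $B$, which excludes $4$-cycles. Hence the girth of $B$ is $6$ and $D$ is a shortest cycle in $B$; the cited theorem of Hell~\cite{hellretraction} then yields a retraction $r\colon B\to D$.

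To finish, I would extend $r$ to a map $f\colon V(\grid)\to V(X)$ by setting $f(v)=c$ for every vertex $v$ with $\psi_\grid(v)=\psi_\grid(c)$, and $f=r$ on the remaining vertices. Hue preservation is immediate, and $f$ fixes $X^-$ because $r$ is a retraction and $f$ fixes $c$ by construction. For the homomorphism property, the only edges not already handled by $r$ are those between a vertex of hue $\psi_\grid(c)$ and a vertex of $B$; these are sent to edges joining $c$ to a vertex of $D$, which are present in $X$ since $c$ is adjacent to every vertex of $D$.

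The main obstacle is the girth computation for $B$; after that, Hell's theorem and the extension step are routine.
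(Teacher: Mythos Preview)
Your proposal is correct and takes essentially the same approach as the paper: the paper explicitly notes that the lemma is ``a consequence of the fact that each shortest cycle in a bipartite graph is a retract~\cite{hellretraction}'' and then unfolds Hell's distance-based construction on the bipartite subgraph $B$ (your hexagonal lattice) for self-containedness, whereas you invoke Hell's theorem as a black box after verifying girth~$6$. The extension step sending all vertices of the central hue to~$c$ is identical in both arguments; the only point you might make explicit is that Hell's construction goes through unchanged for infinite connected bipartite graphs.
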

\begin{proof}
By symmetry, we can assume that $X$ is the subgraph of $\grid$ induced by the vertex $(1,1)$
and its neighbors.  Let $H$ be the subgraph of $\grid$ induced by vertices of hue $0$ and $1$,
and let $e$ be the edge of $H$ between vertices $(0,0)$ and $(0,1)$.  Let
$h:\mathbb{Z}_0^+\to V(X)$ be defined as follows:
$$h(m)=\begin{cases}
(0,0)&\text{if $m=0$}\\
(1,0)&\text{if $m=1$}\\
(2,1)&\text{if $m=2$}\\
(2,2)&\text{if $m=3$}\\
(1,2)&\text{if $m\ge 4$ is even}\\
(0,1)&\text{if $m\ge 5$ is odd}
\end{cases}$$
Observe that for each $m\le 5$, the distance of $h(m)$ from $(0,0)$ in $H-e$ is exactly $m$.
Moreover, note that $H-e$ is bipartite and the vertices at even distance from $(0,0)$ have hue $0$
and those at odd distance have hue $1$. For each vertex $v$ of $\grid$, define $f(v)=(1,1)$ if
$v$ has hue $2$ and $f(v)=h(m)$ if $v$ has hue $0$ or $1$ and its distance from $(0,0)$ in
$H-e$ is $m$.  Clearly, $f$ preserves hue and it is the identity on $X$.

Consider any edge $uv$ of $G$.
\begin{itemize}
\item If $uv\in E(H-e)$, then since $H-e$ is bipartite, the distances from $(0,0)$ to $u$ and $v$ in $H-e$
differ by exactly one.  Observe that $h(0)h(1)h(2)\ldots$ is a walk in $X$, and consequenly
$f$ maps $u$ and $v$ to adjacent vertices of $X$.
\item If $uv=e$, then $f$ maps $u$ and $v$ to adjacent vertices $h(0)=(0,0)$ and $h(5)=(0,1)$ of $X$.
\item If $uv\not\in E(H)$, then exactly one of $u$ and $v$ (say $u$) has hue $2$, and thus
$f$ maps $u$ to the central vertex of $X$ and $v$ to a different vertex of $X$.
\end{itemize}
Therefore $f$ preserves the edges, and consequently $f$ is a retraction from $\grid^-$ to~$X^-$.
\end{proof}

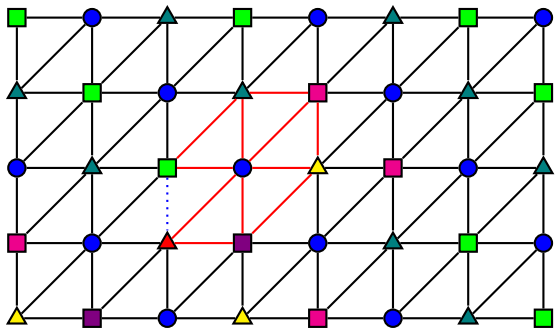
\begin{figure}
\begin{center}
\begin{tikzpicture}
\node[greenvertex] at (-2,1) (v-2) {};
\node[bluevertexv2] at (-1,1) (v-1) {};
\node[tealvertexv3] at (0,1) (v0) {};
\node[greenvertex] at (1,1) (v1) {};
\node[bluevertexv2] at (2,1) (v2) {};
\node[tealvertexv3] at (3,1) (v3) {};
\node[greenvertex] at (4,1) (v4) {};
\node[bluevertexv2] at (5,1) (v5) {};

\node[tealvertexv3] at (-2,0) (u-2) {};
\node[greenvertex] at (-1,0) (u-1) {};
\node[bluevertexv2] at (0,0) (u0) {};
\node[tealvertexv3] at (1,0) (u1) {};
\node[magentavertex] at (2,0) (u2) {};
\node[bluevertexv2] at (3,0) (u3) {};
\node[tealvertexv3] at (4,0) (u4) {};
\node[greenvertex] at (5,0) (u5) {};

\node[bluevertexv2] at (-2,-1) (z-2) {};
\node[tealvertexv3] at (-1,-1) (z-1) {};
\node[greenvertex] at (0,-1) (z0) {};
\node[bluevertexv2] at (1,-1) (z1) {};
\node[yellowvertexv3] at (2,-1) (z2) {};
\node[magentavertex] at (3,-1) (z3) {};
\node[bluevertexv2] at (4,-1) (z4) {};
\node[tealvertexv3] at (5,-1) (z5) {};

\node[magentavertex] at (-2,-2) (t-2) {};
\node[bluevertexv2] at (-1,-2) (t-1) {};
\node[redvertexv3] at (0,-2) (t0) {};
\node[violetvertex] at (1,-2) (t1) {};
\node[bluevertexv2] at (2,-2) (t2) {};
\node[tealvertexv3] at (3,-2) (t3) {};
\node[greenvertex] at (4,-2) (t4) {};
\node[bluevertexv2] at (5,-2) (t5) {};


\node[yellowvertexv3] at (-2,-3) (r-2) {};
\node[violetvertex] at (-1,-3) (r-1) {};
\node[bluevertexv2] at (0,-3) (r0) {};
\node[yellowvertexv3] at (1,-3) (r1) {};
\node[magentavertex] at (2,-3) (r2) {};
\node[bluevertexv2] at (3,-3) (r3) {};
\node[tealvertexv3] at (4,-3) (r4) {};
\node[greenvertex] at (5,-3) (r5) {};

\draw[thick,red] (u1)--(u2);
\draw[thick,red] (u1)--(z1);
\draw[thick,red] (u2)--(z2);
\draw[thick,red] (z1)--(z2);
\draw[thick,red] (u2)--(z1);
\draw[thick,red] (z1)--(z0);
\draw[thick,red] (z0)--(u1);
\draw[thick,blue,dotted] (z0) to (t0);
\draw[thick,red] (t1)--(t0);
\draw[thick,red] (t0)--(z1);
\draw[thick,red] (t1)--(z2);
\draw[thick,red] (t1)--(z1);

\draw[thick,black] (u-2)--(u-1)--(u0)--(u1);
\draw[thick,black] (u2)--(u3)--(u4)--(u5);
\draw[thick,black] (v-2)--(v-1)--(v0)--(v1)--(v2)--(v3)--(v4)--(v5);
\draw[thick,black] (z-2)--(z-1)--(z0);
\draw[thick,black] (z2)--(z3)--(z4)--(z5);
\draw[thick,black] (t-2)--(t-1)--(t0);
\draw[thick,black](t1)--(t2)--(t3)--(t4)--(t5);
\draw[thick,black] (r-2)--(r-1)--(r0)--(r1)--(r2)--(r3)--(r4)--(r5);

\draw[thick,black] (v-2)--(u-2)--(z-2)--(t-2)--(r-2);
\draw[thick,black] (v-1)--(u-1)--(z-1)--(t-1)--(r-1);
\draw[thick,black] (v0)--(u0)--(z0);
\draw[thick,black] (t0)--(r0);
\draw[thick,black] (v1)--(u1);
\draw[thick,black] (t1)--(r1);
\draw[thick,black] (v2)--(u2);
\draw[thick,black] (z2)--(t2)--(r2);
\draw[thick,black] (v3)--(u3)--(z3)--(t3)--(r3);
\draw[thick,black] (v4)--(u4)--(z4)--(t4)--(r4);
\draw[thick,black] (v5)--(u5)--(z5)--(t5)--(r5);

\draw[thick,black] (v-1)--(u-2);
\draw[thick,black] (v0)--(u-1)--(z-2);
\draw[thick,black] (v1)--(u0)--(z-1)--(t-2);
\draw[thick,black] (v2)--(u1);
\draw[thick,black] (z0)--(t-1)--(r-2);
\draw[thick,black] (v3)--(u2);
\draw[thick,black] (t0)--(r-1);
\draw[thick,black] (v4)--(u3)--(z2);
\draw[thick,black](t1)--(r0);
\draw[thick,black] (v5)--(u4)--(z3)--(t2)--(r1);
\draw[thick,black] (u5)--(z4)--(t3)--(r2);
\draw[thick,black] (z5)--(t4)--(r3);
\draw[thick,black] (t5)--(r4);

\end{tikzpicture}
\caption{A picture showing the retraction for Lemma~\ref{lem-retract}. The hexagon with red edges is target hexagon, with the blue circle in the red hexagon being the node $(1,1)$. The blue dotted edge indicates the edge $(0,0)(0,1)$ which is the edge deleted in the proof of Lemma~\ref{lem-retract}. The shapes on the vertex nodes indicate the respective hues, and colors represent which vertex in the retraction the vertices get mapped to. The graph $H$ is the graph obtained by deleting all blue circles.}
\label{fig-retract}

\end{center}
\end{figure}

This implies that the precoloring extension problem in hued patches for single-hexagon precolorings
can be reduced to the problem of 3-precoloring extension in planar bipartite graphs.

\begin{corollary}\label{cor-eq3}
Let $G$ be a hued patch and let $\varphi:V(C)\to\mathbb{Z}_2^2$ be a single-hexagon 4-coloring of
the boundary $C$ of the outer face of $G$, with central hue $c$ and central color $k$.
Let $K=\mathbb{Z}_2^2\setminus \{k\}$.  Let $H$ be the bipartite subgraph of 
$G$ induced by the vertices of hue different from $c$, and let $\varphi'$ be the 
restriction of $\varphi$ to $H$.  Then $\varphi$ extends to a 4-coloring of $G$ if and only
if $\varphi'$ extends to a 3-coloring of $H$ (using the colors in $K$).
\end{corollary}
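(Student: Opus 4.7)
The plan is to establish the two directions of the equivalence separately, with the nontrivial work in the forward direction carried out via Lemma~\ref{lem-retract} and the backward direction reduced to a direct construction.

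For the forward direction, suppose $\varphi$ extends to a 4-coloring $\tilde\varphi$ of $G$.  I would first invoke Theorem~\ref{thm-main} to obtain a homomorphism $f:G^{\tilde\varphi}\to\grid$.  Since $\varphi$ is single-hexagon with central hue $c$ and central color $k$, some homomorphism sends $C^\varphi$ into a hexagon $X_0$ of $\grid$ centered at a vertex of hue $c$ and color $k$, and Observation~\ref{obs-homtot} implies that $f|_C$ differs from it by a hue- and color-preserving translation of $\grid$; hence $f(C)$ lies in some hexagon $X$ whose central vertex still has hue $c$ and color $k$.  Next I would apply Lemma~\ref{lem-retract} to obtain a hue-preserving retraction $\rho:\grid^-\to X^-$, form $g=\rho\circ f$, and define a 4-coloring $\tilde\varphi'$ of $G$ by $\tilde\varphi'(v)=\varphi_\grid(g(v))$.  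Because $\rho$ fixes $X$ pointwise and $f(C)\subseteq X$, the new coloring $\tilde\varphi'$ agrees with $\varphi$ on $C$; and because $g$ is hue-preserving, every vertex of $G$ of hue different from $c$ is sent to a non-central vertex of $X$, whose color lies in $K$.  Restricting $\tilde\varphi'$ to $V(H)$ then gives the desired proper 3-coloring of $H$ with colors in $K$ extending $\varphi'$.

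For the backward direction, given a 3-coloring $\varphi''$ of $H$ with colors in $K$ extending $\varphi'$, I would set $\tilde\varphi(v)=\varphi''(v)$ on $V(H)$ and $\tilde\varphi(v)=k$ on the vertices of $G$ of hue $c$, and then verify that this is a proper 4-coloring of $G$ extending $\varphi$.  Boundary agreement uses the single-hexagon property, which forces $\varphi(v)=k$ whenever $v\in V(C)$ has hue $c$; properness follows from a short case analysis on the hues of the endpoints of each edge, using that adjacent vertices of $G$ have distinct hues and therefore no edge joins two vertices outside $V(H)$.

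The main obstacle will be the forward step of replacing $\tilde\varphi$ by the cleaner coloring $\tilde\varphi'$: an arbitrary extension $\tilde\varphi$ may assign color $k$ to vertices of hue different from $c$, and the argument must relocate those assignments without disturbing the precolored boundary.  Lifting to the grid and retracting to the hexagon dictated by the boundary is what accomplishes this, and the part that requires care is confirming that $f(C)$ really does lie in a hexagon whose central vertex has hue $c$ and color $k$, where both the definition of single-hexagon coloring and the rigidity provided by Observation~\ref{obs-homtot} are essential.
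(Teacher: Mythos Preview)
Your proposal is correct and follows essentially the same route as the paper: the backward direction assigns color $k$ to all hue-$c$ vertices, and the forward direction lifts the extension to $\grid$ via Theorem~\ref{thm-main}, retracts onto the hexagon $X$ using Lemma~\ref{lem-retract}, and pulls back $\varphi_X$ to obtain a new $4$-coloring that avoids $k$ on $H$. Your use of Observation~\ref{obs-homtot} to justify that $f(C)$ lands in a hexagon with central hue $c$ and color $k$ is a slightly more explicit version of what the paper asserts in one sentence, but otherwise the arguments coincide.
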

\begin{proof}
The vertices of $G$ of hue $c$ form an independent set; hence, a 3-coloring of $H$ extending
$\varphi'$ can be extended to a 4-coloring $\theta$ of $G$ by giving all vertices of hue $c$ the 
color $k$. Recall that all vertices of $C$ of hue $c$ have color $k$, and thus $\theta$ extends
$\varphi$.

Conversely, suppose that $\varphi$ extends to a 4-coloring $\beta$ of $G$.  By
Theorem~\ref{thm-main}, the dappled graph $G^\beta$ has a homomorphism $f$ to
the dappled triangular grid $\grid$. Since $\varphi$ is a single-hexagon 4-coloring, there exists
a hexagon $X$ in $\grid$ such that the restriction of $f$ to $C$ is a homomorphism to $X$.
By Lemma~\ref{lem-retract}, there exists a retraction $r$ from $\grid^-$ to $X^-$.
Let $f'$ be the composition of $f$ and $r$.  Then $f'$ is a homomorphism
from $G$ to $X^-$ such that $f'(v)=f(v)$ for every $v\in V(C)$.
Let $\beta':V(G)\to \mathbb{Z}_2^2$ be defined by setting $\beta'(u)=\varphi_X(f'(u))$ for each $u\in V(G)$.
Since $f'$ is a homomorphism from $G$ to $X^-$ and $\varphi_X$ is a proper 4-coloring of $X^-$,
$\beta'$ is a proper 4-coloring of $G$.  Moreover, for each $v\in V(C)$,
we have $\beta'(v)=\varphi_X(f'(v))=\varphi_\grid(f(v))=\beta(v)=\varphi(v)$,
and thus $\beta'$ extends $\varphi$ and is a single-hexagon colouring. 

Every vertex of $X$ other than the central one has hue different from $c$ and color different from $k$,
and thus $\beta'$ assigns color $k$ exactly to vertices of $G$ of hue $c$.
Consequently, the restriction of $\beta'$ to $H$ is a 3-coloring of $H$ using the
colors in $K$ and extending $\varphi'$.
\end{proof}

Dvořák, Král' and Thomas~\cite{3colalgorithm} gave for every $b$ a linear-time algorithm to decide whether
a precoloring of at most $b$ vertices of a planar triangle-free graph extends to a 3-coloring.
Hence, we have the following consequence, a common strengthening of Theorems~\ref{precol3} and \ref{precolfive}.
\begin{corollary}\label{cor-algsing}
For every integer $\ell$, there exists an algorithm that, given a hued patch $G$ with
the outer face of length at most $\ell$ and a single-hexagon precoloring $\varphi$ of the boundary of
the outer face, decides in linear time whether $\varphi$ extends to a 4-coloring of $G$.
\end{corollary}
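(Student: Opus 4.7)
The plan is a direct reduction to 3-precoloring extension in planar triangle-free graphs via Corollary~\ref{cor-eq3}, followed by an invocation of the algorithm of Dvořák, Král' and Thomas~\cite{3colalgorithm}. Given the input $(G,\varphi)$, first identify in linear time the central hue $c$ and central color $k$ of the single-hexagon precoloring (e.g.\ by reading off the colors/hues along $C$), then form the induced subgraph $H$ of $G$ on vertices of hue different from $c$ together with the restriction $\varphi'$ of $\varphi$ to $V(C)\cap V(H)$, exactly as in the statement of Corollary~\ref{cor-eq3}.

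Next verify that $(H,\varphi')$ fits the hypothesis of the cited algorithm. The graph $H$ is planar as a subgraph of the plane graph $G$. It is triangle-free: since $\psi_G$ is a proper 3-coloring, the three vertices of any triangle of $G$ carry all three hues in $\mathbb{Z}_3$, so every triangle meets the hue class $c$ and is therefore destroyed upon passing to $H$. The set of vertices precolored by $\varphi'$ is a subset of $V(C)$, so it has size at most $\ell$.

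Finally, identify the three remaining colors $K=\mathbb{Z}_2^2\setminus\{k\}$ with an abstract palette of size three and call the linear-time algorithm of~\cite{3colalgorithm}, with the constant $b=\ell$, on input $(H,\varphi')$ to decide whether $\varphi'$ extends to a 3-coloring of $H$. By Corollary~\ref{cor-eq3}, this is precisely the decision of whether $\varphi$ extends to a 4-coloring of $G$, so return the same answer. Total running time is linear in $|V(G)|$.

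There is no real obstacle in this step; the genuine content was absorbed upstream into Theorem~\ref{thm-main} and Lemma~\ref{lem-retract} (which together produce Corollary~\ref{cor-eq3}) and into the triangle-free precoloring extension result of~\cite{3colalgorithm}. The only point worth a line of verification is the triangle-freeness of $H$, which is immediate from the fact that the three hues appear on every face-triangle of the patch.
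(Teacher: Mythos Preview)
Your proposal is correct and follows exactly the paper's approach: reduce via Corollary~\ref{cor-eq3} to 3-precoloring extension in the bipartite (hence triangle-free) planar graph $H$ with at most $\ell$ precolored vertices, then invoke the linear-time algorithm of Dvo\v{r}\'ak, Kr\'al' and Thomas~\cite{3colalgorithm}. The paper's own justification is even terser---it simply states the cited result and declares the corollary a consequence---so your added verification of triangle-freeness (already implicit since Corollary~\ref{cor-eq3} asserts $H$ is bipartite) is more detail than the paper provides, not less.
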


Corollary~\ref{cor-eq3} shows that the problem of extending the precoloring of the outer face
in Eulerian triangulations generalizes the analogous problem in bipartite graphs.
For a 2-connected bipartite plane graph $B$, the \emph{patch extension} of $B$ is the hued odd patch
created by adding a vertex to each face except for the outer one and joining it with all vertices
in the boundary of the face; the vertices of $B$ are given hues $0$ and $1$ and
the newly added vertices are given hue $2$.

\begin{corollary}
Let $B$ be a 2-connected bipartite planar graph.  A $3$-coloring of the boundary of the outer
face of $B$ extends to a 3-coloring of $B$ if and only if it extends to a $4$-coloring of
the patch extension of $B$.
\end{corollary}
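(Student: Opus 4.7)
The proof is essentially an application of Corollary~\ref{cor-eq3} once we verify that the patch extension fits the single-hexagon framework. I will proceed in three short steps.

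First, I would verify that the patch extension $G$ of $B$ is a hued odd patch (in particular a planar near-Eulerian-triangulation). By construction every internal face of $G$ is a triangle, and since $B$ is 2-connected its outer face is bounded by a cycle, which is the outer face of $G$. Because $B$ is bipartite, every internal face of $B$ has even length, so each newly added vertex has even degree. For a vertex of $B$ not on the outer face, of degree $d$ in $B$, it lies in $d$ internal faces of $B$, hence has degree $2d$ in $G$ (even). For a vertex $v$ on the outer face of $B$ of degree $d$, it lies in $d-1$ internal faces of $B$ and so has degree $d+(d-1)=2d-1$ in $G$, which is odd.

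Second, I would invoke the observation from Section~\ref{sec-hexa} that any $4$-coloring of the outer face of an odd patch which uses at most three colors is a single-hexagon precoloring, with central hue $2$ and central color equal to the unused color. Since the outer-face vertices of $G$ are precisely the outer-face vertices of $B$ (all of hue in $\{0,1\}$), a $3$-coloring $\varphi$ of the outer face of $B$ is automatically a single-hexagon precoloring of the outer face of $G$; let $k$ be the missing color and $K=\mathbb{Z}_2^2\setminus\{k\}$ be the set of three colors actually used by $\varphi$.

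Third, I would apply Corollary~\ref{cor-eq3} with $G$, $\varphi$, $c=2$, and $k$ as above. That corollary states that $\varphi$ extends to a $4$-coloring of $G$ if and only if its restriction to the bipartite subgraph $H$ of $G$ induced by vertices of hue different from~$2$ extends to a $3$-coloring of $H$ using colors in $K$. But by the construction of the patch extension, the vertices of hue $2$ are exactly the faces-vertices added, so $H=B$, and the restriction of $\varphi$ to $H$ is simply $\varphi$ itself. This gives exactly the claimed equivalence.

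There is no real obstacle in this proof; the only thing to be careful about is the parity computation showing that outer-face vertices of $G$ indeed have odd degree (so Corollary~\ref{cor-eq3} is applicable), and the identification that the bipartite subgraph produced by the reduction is literally $B$.
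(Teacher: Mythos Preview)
Your proof is correct and follows exactly the approach intended by the paper, which states the corollary as an immediate consequence of Corollary~\ref{cor-eq3} without giving a separate proof. Your verification of the degree parities and the identification $H=B$ are the only details one needs to check, and you have done so correctly; note that the odd-degree computation is not strictly required for Corollary~\ref{cor-eq3} itself (which only needs a single-hexagon precoloring), but it justifies invoking the paper's observation about odd patches, and in any case the two-hue property of the boundary already holds by the definition of the patch extension.
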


\section{Homotopy}\label{sec-homot}

We were not able to find an exact characterization for extendability
of a precoloring $\varphi$ of the cycle $C$ bounding the outer face of a hued patch in general.
However, a useful necessary condition (Lemma~\ref{lemma-topol}) can be expressed in terms of the homotopy of the walk to
which $C$ is mapped by the homomorphism of $C^\varphi$ to the dappled triangular grid $\grid$.
In the language of algebraic topology, we are studying homotopy to the 1-dimensional complex~$\grid$. 
We prefer a self-contained treatment, so let us start with some definitions.

A \emph{closed walk} $U$ in a simple graph is a cyclic sequence $u_1,\ldots,u_{m}$ such that $u_{i}u_{i+1}$ is an edge for $i\in\{1,\ldots,m-1\}$
and $u_mu_1$ is an edge.  Let us remark that the labelling of vertices in the cyclic sequence is
arbitrary, i.e., $u_1$ and $u_m$ do not play any special role and are not specified with the closed walk.
By a \emph{walk}, we mean a sequence
$W=v_1,\ldots,v_k$ of vertices such that $v_iv_{i+1}$ is an edge for $i\in\{1,\ldots,k-1\}$.  We say that $v_{1}$ and $v_{k}$ are the \emph{ends} of the walk.
If $v_1=v_k$, we say that $W$ is an \emph{opening} of the closed walk given by the cyclic sequence $v_1, \ldots, v_{k-1}$.
Note that if the vertices of a closed walk $U=u_1,\ldots,u_m$ are pairwise different, then $U$ has exactly $m$ different openings.
For a closed walk $U$, we write $-U$ for the reversed closed walk.

We say that a walk $W$ is a \emph{combination} of a walk $W_1=v_1,\ldots, v_k$ and a closed walk $W_2$ if
there exists an opening $z_1,\ldots,z_m$ of $W_2$ and
$i\in \{1,\ldots,k\}$ such that $v_i=z_1=z_m$ and $W=v_1,\ldots, v_i, z_2\ldots, z_m, v_{i+1},\ldots, v_k$.
A combination of two closed walks is defined analogously.

Let $W=v_1,v_2,\ldots, v_k$ be a walk.  If $v_i=v_{i+2}$
for some $i\in\{1,\ldots,k-2\}$, we say that the segment $v_iv_{i+1}v_{i+2}$ of the walk is \emph{retractable},
and that $W'=v_1,\ldots, v_iv_{i+3},\ldots, v_k$ is obtained from $W$ by a \emph{one-step retraction}.
The \emph{topological retract} of $W$ is the walk obtained by performing
the one-step retraction operation as long as possible; note that the topological retract of $W$ has the same ends as $W$.  
\begin{observation}\label{obs-unique}
The topological retract of a walk is unique, independent of the choice and order of the one-step retraction operations.
\end{observation}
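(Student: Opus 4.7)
The plan is to use a standard confluence argument (the diamond lemma for terminating abstract rewriting systems). Write $W \Rightarrow W'$ if $W'$ is obtained from $W$ by a single one-step retraction, and $\Rightarrow^{\ast}$ for its reflexive transitive closure. Every one-step retraction decreases the length of the walk by exactly two, so the rewriting relation $\Rightarrow$ is terminating; hence every walk admits at least one maximally retracted descendant. What needs to be shown is that this descendant is unique.

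The core step is to establish the local confluence (diamond) property: whenever $W \Rightarrow W_1$ and $W \Rightarrow W_2$, there exists a walk $W^{\ast}$ with $W_1 \Rightarrow^{\ast} W^{\ast}$ and $W_2 \Rightarrow^{\ast} W^{\ast}$. Let the two retractions act on the retractable segments starting at positions $i$ and $j$ respectively, with $i \le j$, so $v_i = v_{i+2}$ and $v_j = v_{j+2}$. A short case analysis in $j - i$ settles this. If $j = i$, then $W_1 = W_2$. If $j \ge i + 3$, the two segments are disjoint and the retractions literally commute, so applying the other retraction to each of $W_1, W_2$ yields the same walk. If $j = i+1$, then the conditions $v_i = v_{i+2}$ and $v_{i+1} = v_{i+3}$ together already force $W_1 = W_2$. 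Finally, if $j = i+2$, we have $v_i = v_{i+2} = v_{i+4}$; each of $W_1, W_2$ therefore contains a freshly created retractable segment, and one further one-step retraction on each side produces the common walk $v_1, \ldots, v_i, v_{i+5}, \ldots, v_k$.

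Given local confluence together with termination, Newman's lemma implies confluence of $\Rightarrow$, and confluence together with termination implies uniqueness of normal forms. Thus the topological retract of $W$ is the unique walk reachable from $W$ by a maximal sequence of one-step retractions. A self-contained alternative avoiding the explicit appeal to Newman is an induction on the length of $W$: the diamond property provides, for any two first steps $W \Rightarrow W_1$ and $W \Rightarrow W_2$, a common $W^{\ast}$ reachable from both; applying the induction hypothesis to $W_1$ and $W_2$, each maximal retraction sequence from $W$ ends at the topological retract of $W^{\ast}$, which is therefore well-defined.

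I expect the only even mildly delicate point to be the $j = i+2$ case, where the two retractions do not literally commute but become confluent only after one further retraction on each side; every other case is routine index bookkeeping, and no deeper property of walks in graphs is needed beyond the defining constraint $v_iv_{i+1} \in E(G)$.
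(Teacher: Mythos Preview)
Your proof is correct. The case analysis for local confluence is complete and accurate, including the $j=i+2$ case where one extra retraction on each side is needed, and the appeal to Newman's lemma (or the equivalent induction on length) is entirely standard and sound.

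The paper's proof is in the same spirit but organized differently. Rather than establishing local confluence at the first step and invoking Newman, the paper takes a minimal counterexample $W$ with two retraction sequences $R=r_1,\ldots$ and $R'=r'_1,\ldots$ leading to distinct normal forms, tracks where the segment retracted by $r_1$ eventually disappears inside $R'$ (at some step $r'_j$), observes via the same three-case overlap analysis that $r'_j$ can be swapped with $r_1$, and thereby produces two shorter sequences from the common walk $W_1$ obtained after $r_1$, contradicting minimality. In effect the paper pushes $r_1$ to the front of $R'$ rather than joining the two branches after one step. Your approach is the cleaner, textbook rewriting-system formulation; the paper's is a hand-rolled variant of the same confluence idea that avoids naming Newman's lemma but does essentially the same work.
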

\begin{proof}
Suppose for a contradiction that there exists a walk $W$ and two sequences $R=r_1,\ldots,r_a$ and $R'=r'_1,\ldots,r'_b$ of one-step retraction operations
ending in different topological retracts $Z$ and $Z'$.  Choose $W$ and these sequences to be as short as possible.
Let $x_1x_2x_3$ be the retractable segment of $W$ contracted by $r_1$.  This segment does not appear in $Z'$, and thus at least one
of its edges must be removed by a one-step retraction operation of $R'$.  Thus, there exists $j\in \{1,\ldots,b\}$ such that,
letting $W'$ be the walk obtained from $W$ by performing the operations $r'_1$, \ldots, $r'_{j-1}$, $W'$ still contains the segment $x_1x_2x_3$ and
\begin{itemize}
\item $r'_j$ contracts the segment $x_1x_2x_3$, or
\item $W'$ contains segment $x_1x_2x_3x_4$ with $x_2=x_4$ and $r'_j$ contracts the retractable segment $x_2x_3x_4$, or
\item $W'$ contains segment $x_0x_1x_2x_3$ with $x_0=x_2$ and $r'_j$ contracts the retractable segment $x_0x_1x_2$.
\end{itemize}
Let $R''=r_1,r'_2,\ldots,r'_{j-1},r'_{j+1},\ldots,r'_b$, and observe that in any of the three cases, performing the sequence $R''$ on $W$ has the same
effect as performing $R'$, i.e., results in $Z'$.  Let $W_1$ be obtained from $W$ by performing $r_1$; then $Z$ and $Z'$ are obtained
from $W_1$ by performing $R_1=r_2,\ldots,r_a$ and $R''_1=r'_2,\ldots,r'_{j-1},r'_{j+1},\ldots,r'_b$.  This contradicts the choice of the sequences $R$ and $R'$ as shortest possible.
\end{proof}

We define a one-step retraction and the topological retract for closed walk analogously;
to clarify, we define a one-step retraction of a closed walk of length two to be an empty walk.
We say that a closed walk is \emph{null-homotopic} if its topological retract is empty.
Note that a null-homotopic closed walk necessarily has even length.
The following claim has the same proof as Observation~\ref{obs-unique}.
\begin{observation}\label{obs-unique-closed}
The topological retract of a closed walk is unique, independent of the choice and order of the one-step retraction operations.
\end{observation}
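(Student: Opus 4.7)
The plan is to imitate the minimal-counterexample argument of Observation~\ref{obs-unique}, reading all indices of the closed walk cyclically. Suppose for contradiction that some closed walk $W$ and two sequences $R=r_1,\ldots,r_a$ and $R'=r'_1,\ldots,r'_b$ of one-step retractions end in distinct topological retracts $Z$ and $Z'$; choose the triple $(W,R,R')$ so that $a+b$ is as small as possible.

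Let $x_1x_2x_3$ be the retractable segment of $W$ contracted by $r_1$ (with indices taken cyclically along $W$). Because $Z'$ is fully retracted and differs from $Z$, the segment $x_1x_2x_3$ cannot persist through all of $R'$. Let $j$ be the smallest index such that, setting $W'$ to be the closed walk obtained by performing $r'_1,\ldots,r'_{j-1}$ on $W$, the walk $W'$ still contains $x_1x_2x_3$ while $r'_j$ removes at least one edge of it. Exactly as in Observation~\ref{obs-unique}, one of three cases arises: $r'_j$ contracts $x_1x_2x_3$ itself; $W'$ contains a segment $x_1x_2x_3x_4$ with $x_4=x_2$ and $r'_j$ contracts $x_2x_3x_4$; or $W'$ contains $x_0x_1x_2x_3$ with $x_0=x_2$ and $r'_j$ contracts $x_0x_1x_2$.

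In every case, forming $R''=r_1,r'_2,\ldots,r'_{j-1},r'_{j+1},\ldots,r'_b$ has the same effect on $W$ as $R'$, so still produces $Z'$. Writing $W_1$ for the result of applying $r_1$ to $W$, the tail sequences $r_2,\ldots,r_a$ and $r'_2,\ldots,r'_{j-1},r'_{j+1},\ldots,r'_b$ reduce $W_1$ to $Z$ and $Z'$ respectively, with a strictly smaller total length than $a+b$, contradicting the minimality of the chosen triple.

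The main point of care is cyclic bookkeeping: one must check that the three-case classification of the possible overlaps between $r_1$'s segment and $r'_j$'s segment remains exhaustive when one of them straddles the arbitrary cut point at which the closed walk is opened. This is a routine verification rather than a substantive obstacle, since the notion of retractable segment is inherently local. A small boundary case is the stipulation that a one-step retraction of a closed walk of length two yields the empty walk; here $R$ and $R'$ must each be a single step producing the empty closed walk, so this degenerate case contributes no counterexample and the argument above covers all other instances.
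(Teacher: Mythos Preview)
Your proof is correct and is exactly what the paper intends: the paper's own proof consists of the single sentence that the claim ``has the same proof as Observation~\ref{obs-unique},'' and you have simply written out that argument with the cyclic indexing and the length-two boundary case made explicit.
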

We need several further simple observations on topological retracts.

\begin{observation}\label{obs-two}
If $W$ is a non-empty null-homotopic closed walk, then there exist at least two segments of $W$ are retractable.
\end{observation}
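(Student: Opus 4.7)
The plan is to prove the observation by induction on the length $m$ of $W$, which must be a positive even integer. The strategy is to assume for contradiction that $W$ has exactly one retractable segment (at least one exists, since $W$ is null-homotopic), perform that retraction to obtain $W'$ of length $m-2$, and use the inductive hypothesis on $W'$ (which is non-empty and null-homotopic for $m \ge 4$) to derive a contradiction. The base cases $m = 2$ and $m = 4$ I dispose of by direct enumeration: for $m = 2$ both cyclic three-vertex segments $v_1 v_2 v_1$ and $v_2 v_1 v_2$ trivially satisfy the retractability condition, while for $m = 4$ null-homotopy forces one of $v_1 = v_3$ or $v_2 = v_4$ to hold, and either equality yields two retractable segments at antipodal positions (for example, $v_1 = v_3$ makes both $v_1 v_2 v_3$ and $v_3 v_4 v_1$ retractable).

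For the inductive step $m \ge 6$, after cyclic relabelling I assume the unique retractable segment of $W$ is $v_1 v_2 v_3$ with $v_1 = v_3$, so that $W' = v_1, v_4, v_5, \ldots, v_m$. I would then carefully match up the positions of $W'$ with retractability conditions on the vertices of $W$: the positions of $W'$ strictly between the two ``seams'' correspond bijectively to positions $4, 5, \ldots, m-1$ of $W$ with the same retractability conditions; the first seam position $v_1 v_4 v_5$ is retractable if and only if $v_1 = v_5$, which (since $v_1 = v_3$) is equivalent to the $W$-condition at position $3$; and only the second seam position $v_m v_1 v_4$ is genuinely new, being retractable if and only if $v_m = v_4$---a condition not appearing among the $W$-positions. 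Since we assumed $W$ has no retractable segments at positions other than $1$, this forces all positions of $W'$ except possibly the second seam to be non-retractable, giving at most one retractable segment of $W'$ and contradicting the inductive hypothesis.

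The main obstacle is the position bookkeeping above: \emph{a priori} the retraction could introduce two new retractability conditions (one at each seam), which would block the induction. The key observation saving the argument is that the first seam condition $v_1 = v_5$ collapses to an old $W$-condition thanks to $v_1 = v_3$, leaving only one genuinely new condition at the second seam.
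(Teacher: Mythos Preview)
Your proof is correct and uses essentially the same inductive strategy as the paper: retract one segment to obtain $W'$, apply the induction hypothesis, and lift retractable segments of $W'$ away from the merge point back to $W$. The paper phrases the argument directly rather than by contradiction---picking a retractable segment of $W'$ whose middle vertex is not $x_1$ and observing it is also a retractable segment of $W$---which lets it dispense with the separate $m=4$ base case, but the core idea is identical.
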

\begin{proof}
By induction on the length of $W$.  The claim is obvious if $|W|=2$.  Otherwise, consider a retractable segment $x_1x_2x_3$ of $S_1$
of $W$, and let $W'$ be the one-step retraction of $W$ on this segment.  By the induction hypothesis, $W'$ has
at least two retractable segments; hence, there is a retractable segment $S_2$ of $W'$ whose middle vertex is not $x_1$.
Then $S_1$ and $S_2$ are retractable segments of $W$.
\end{proof}

\begin{observation}\label{obs-compnull}
A combination $Z$ of a closed walk $W$ with a null-homotopic closed walk $W'$ is null-homotopic
if and only if $W$ is null-homotopic.
\end{observation}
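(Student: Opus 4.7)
The plan is to reduce both directions of the equivalence to Observation~\ref{obs-unique-closed}. Concretely, I would show that the inserted copy of $W'$ inside $Z$ can be retracted, via a sequence of closed-walk one-step retractions of $Z$, down to a single vertex. After these retractions $Z$ has become $W$, so by uniqueness of the topological retract, $Z$ and $W$ have the same topological retract, and hence one is empty if and only if the other is.

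The key lemma I would prove first is: if $W'$ is a null-homotopic closed walk and $z_1,\ldots,z_m$ is an opening of $W'$, then the topological retract of the walk $z_1,\ldots,z_m$ is the single-vertex walk~$z_1$. I would prove this by induction on the length $m-1$ of $W'$. The cases $m=1$ (empty $W'$) and $m=3$ are immediate. For the inductive step, the subtle point is that a retractable segment of the closed walk $W'$ may ``wrap around'' the endpoint of the opening---it may have the form $(z_{m-1},z_1,z_2)$---and such a segment is not a retractable segment of the walk $z_1,\ldots,z_m$. This is where I would invoke Observation~\ref{obs-two}: a non-empty null-homotopic closed walk has at least two retractable segments, and at most one of them is this wrap-around segment, so at least one retractable segment corresponds to a triple $(z_j,z_{j+1},z_{j+2})$ with $1\le j\le m-2$. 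Performing the corresponding one-step retraction on the opening yields an opening of a shorter null-homotopic closed walk (still starting and ending at $z_1$), and the induction hypothesis finishes the argument.

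Granted the lemma, write $Z=v_1,\ldots,v_i,z_2,\ldots,z_m,v_{i+1},\ldots,v_k$ with $v_i=z_1=z_m$. The sequence of one-step retractions supplied by the lemma involves only segments $(z_j,z_{j+1},z_{j+2})$ with $1\le j\le m-2$, and each such segment is also a retractable segment of the closed walk $Z$ (the retraction is entirely internal to the inserted block and affects neither $v_{i+1}$ nor $v_{i-1}$). Applying these operations transforms $Z$ into the closed walk $v_1,\ldots,v_i,v_{i+1},\ldots,v_k$, which is exactly $W$. By Observation~\ref{obs-unique-closed}, the topological retract of $Z$ equals the topological retract of $W$, and therefore $Z$ is null-homotopic if and only if $W$ is.

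The main obstacle is precisely the wrap-around phenomenon in the lemma: a naive induction that blindly applies an arbitrary one-step retraction of $W'$ to the linear opening can fail, because the operation may straddle the cut point $z_1=z_m$. Observation~\ref{obs-two} is the essential tool that rescues the argument by guaranteeing a ``non-straddling'' retractable segment always exists.
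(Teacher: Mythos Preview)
Your proposal is correct and follows essentially the same approach as the paper. Both arguments hinge on the same key step: using Observation~\ref{obs-two} to guarantee a retractable segment of $W'$ that is not the wrap-around segment centered at $z_1$, so that the segment appears inside the opening (and hence inside $Z$), allowing a one-step retraction that shortens the inserted copy of $W'$ without disturbing $W$; one then inducts. The only difference is organizational: you isolate the claim ``the topological retract of any opening of a null-homotopic closed walk is a single vertex'' as a preliminary lemma and then apply it inside $Z$, whereas the paper performs the induction directly on the pair $(Z,W')$, retracting the same segment $S$ simultaneously in $Z$ and in $W'$ and observing that the result is again a combination of $W$ with a shorter null-homotopic closed walk.
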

\begin{proof}
The claim is trivial if $W'$ is empty.  Otherwise, by Observation~\ref{obs-two}, there exists a retractable segment $S$ of $W'$
that also appears in the opening of $W'$ used to construct $Z$, and thus also in $S$.  Let $Z_1$ and $W'_1$ be the one-step
retractions of $Z$ and $W'$ on $S$, respectively; then $Z_1$ is a combination of $W$ and $W'_1$.  The claim now follows
by induction, using the uniqueness of the topological retracts.
\end{proof}

\begin{observation}\label{obs-decwalk}
Let $W=u_1,\ldots,u_m$ be a closed walk and let $W_1$ and $W_2$ be walks such that 
$W_1=u_1,u_2,\ldots,u_k$ and $W_2=u_1,u_m,u_{m-1},\ldots, u_k$ for some $k\in\{1,\ldots, m\}$.
Then $W$ is null-homotopic if and only if $W_1$ and $W_2$ have the same topological retract.
\end{observation}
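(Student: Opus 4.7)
The strategy is to translate the statement about the closed walk $W$ into one about the walk $P = u_1, u_2, \ldots, u_m, u_1$, the opening of $W$ based at $u_1$; this $P$ is obtained by traversing $W_1$ and then $W_2$ in reverse, with the two meeting at the common endpoint $u_k$. The proof then reduces to two intermediate facts: (a) $W$ is null-homotopic if and only if the topological retract of $P$ is the trivial one-vertex walk at $u_1$, and (b) the topological retract of $P$ is this trivial walk if and only if the topological retracts of $W_1$ and $W_2$ coincide.

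For (a), the direction ``$P$ retracts to trivial $\Rightarrow W$ null-homotopic'' is easy: each interior one-step retraction of $P$ is simultaneously a one-step retraction of $W$, so reducing $P$ to the single vertex $u_1$ reduces $W$ to length $0$. For the converse I would induct on the length of $W$, strengthening the claim to ``every opening of $W$ retracts to the trivial walk''. The case $m=2$ is immediate, since $u_1, u_2, u_1$ retracts in one step to $u_1$. For the inductive step, the argument in the proof of Observation~\ref{obs-two} produces two retractable segments of $W$ with distinct middle vertices; at least one of them has middle vertex different from $u_1$, and hence appears as an interior segment of $P$. Retracting this segment in both $W$ and $P$ simultaneously produces a strictly shorter null-homotopic closed walk $W'$ together with its opening at $u_1$, and the inductive hypothesis finishes the argument.

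For (b), Observation~\ref{obs-unique} allows me to perform all retractions inside $W_1$ and inside $W_2$ independently first, reducing $P$ to the walk $R_1$ followed by the reverse of $R_2$, where $R_1, R_2$ denote the topological retracts of $W_1, W_2$. Since $R_1$ and $R_2$ are themselves reduced, the only remaining retractable segment of this walk sits at the junction $u_k$, and it is retractable exactly when the vertices of $R_1$ and $R_2$ immediately preceding $u_k$ coincide. One such cancellation shortens both $R_1$ and $R_2$ by one vertex and exposes a new junction at the shared vertex, so cancellation proceeds as long as the tails of $R_1$ and $R_2$ match. Consequently the walk retracts to the single vertex $u_1$ precisely when $R_1 = R_2$ vertex by vertex. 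Combining (a) and (b) yields the observation.

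The main subtle point is the inductive step of (a): an opening based at $u_1$ is blind to the one closed-walk segment with middle vertex $u_1$, so one must use the existence of \emph{two} retractable segments with distinct middle vertices (as extracted from the proof of Observation~\ref{obs-two}) to ensure that some retraction of $W$ can be realised at the level of the opening. Everywhere else the argument is routine bookkeeping supported by the uniqueness statements in Observations~\ref{obs-unique} and~\ref{obs-unique-closed}.
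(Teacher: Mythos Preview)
Your proposal is correct and uses the same underlying ideas as the paper's proof (uniqueness of topological retracts, the existence of two retractable segments from Observation~\ref{obs-two}, and cancellation at the junction), but the organization differs. The paper argues directly with $W$, $W_1$, $W_2$: in the forward direction it simply retracts both $W_1$ and $W_2$ and then cancels the resulting palindrome; in the reverse direction it inducts by retracting inside $W_1$ or $W_2$ whenever possible, and when neither contains a retractable segment it observes that the two retractable segments of $W$ guaranteed by Observation~\ref{obs-two} must then be centred at $u_1$ and $u_k$, forcing $u_2=u_m$, and peels off one layer. Your route instead introduces the opening $P$ and splits the statement into the two sub-claims (a) and (b), which makes the structure a bit more modular and makes the role of Observation~\ref{obs-two} (supplying a retractable segment avoiding the basepoint) explicit. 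One small point of phrasing: in your inductive step for (a) you should speak of retractable segments at distinct \emph{positions} in the cyclic sequence rather than with distinct ``middle vertices'', since the same vertex of the graph may occur at several positions; what you need, and what the proof of Observation~\ref{obs-two} actually gives, is two retractable segments at different positions, at most one of which can be the wrap-around segment centred at position~$1$.
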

\begin{proof}
If $W_1$ and $W_2$ have the same topological retracts $W'_1$ and $W'_2$, then the concatenation $Z$ of $W'_1$ and $W'_2$
is obtained from $W$ by a sequence of one-step retractions, and since $W'_1=W'_2$, we can retract $Z$ from either end to an empty closed
walk, showing that $W$ is null-homotopic.

Conversely, suppose that $W$ is null-homotopic.  If there exists $i\in\{1,2\}$ such that $W_i$ contains a retractable segment $S$, then we can perform a one-step retraction on $S$
in $W_i$ and in $W$ and proceed by induction.  Otherwise, neither of the two retractable segments $S_1$ and $S_2$ which exist in $W$ appear in $W_1$ and $W_2$,
and thus $u_1$ and $u_k$ are their middle points, respectively.  This implies that $W_1$ and $W_2$ have the same starting edge, i.e., $u_2=u_m$.
We can contract $S_1$ in $W$ and apply the same observation to walks $W''_1=u_2,\ldots,u_k$ and $W''_2=u_m,u_{m-1},\ldots, u_k$ to conclude $u_3=u_{m-1}$, etc.,
eventually concluding that $W_1=W_2$.
\end{proof}

\begin{observation}\label{obs-plane}
Let $H$ be a connected plane graph with the outer face bounded by a cycle $C$ (in clockwise order).
For each internal face $p$ of $H$, let $F_p$ be the closed walk bounding $p$ (in clockwise order).
There exists a combination of these closed walks (in some order) and $-C$ such that the resulting closed walk is null-homotopic.
\end{observation}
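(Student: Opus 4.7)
The plan is to prove the stronger statement where the outer boundary is allowed to be an arbitrary closed walk $C^*$ (not necessarily a cycle), since the stated claim is the special case $C^* = C$. I would proceed by induction on the number $k$ of internal faces of $H$. In the base case $k = 0$, the connected graph $H$ is a tree and $C^*$ is its boundary tour traversing each edge twice in opposite directions; a sub-induction on $|E(H)|$ shows $-C^*$ is null-homotopic, since any leaf $\ell$ with neighbor $v$ forces $C^*$ to contain a retractable segment $v,\ell,v$ whose retract is the boundary tour of $H-\ell$.

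For the inductive step with $k \ge 1$, the dual graph of $H$ is connected, so I would pick an internal face $p$ sharing some edge $e = uv$ with the outer face. Since $e$ borders two distinct faces, it is not a bridge, so $H' := H - e$ is a connected plane graph with $k-1$ internal faces, whose new outer face is bounded by a closed walk $C'$ obtained by merging $C^*$ and $F_p$ across $e$. The key geometric point is that under the convention that all face boundaries (internal and outer) are oriented in the same rotational sense of the plane, $F_p$ and $C^*$ traverse the shared edge $e$ in the \emph{same} direction along $e$, so $F_p$ and $-C^*$ traverse $e$ in \emph{opposite} directions. Opening $-C^*$ at $u$ ending with the edge $vu$, as $u, c_1, \ldots, c_{n-1}, v, u$, and opening $F_p$ at $u$ starting with the edge $uv$, as $u, v, w_1, \ldots, w_m, u$, then splicing $F_p$ into $-C^*$ after its final vertex yields a closed walk containing the retractable segment $v,u,v$; one step of retraction transforms this combination into exactly $-C'$.

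By the induction hypothesis applied to $H'$, there is a combination $\hat{W}$ of the remaining boundary walks $F_{p'}$ (for $p' \ne p$) and $-C'$ that is null-homotopic. Performing the same splicings on the combination of $-C^*$ and $F_p$ in place of $-C'$ produces a closed walk $W^*$ that is a combination of all $F_p$'s and $-C^*$, and $W^*$ reduces to $\hat{W}$ by performing the single retraction of the $v,u,v$ segment first. By the uniqueness of topological retracts (Observation~\ref{obs-unique-closed}), $W^*$ retracts to the empty walk, so it is null-homotopic, completing the induction. The main obstacle is verifying the orientation convention: that $F_p$ and $-C^*$ indeed produce a retractable segment at $e$ when combined as above, and that its retract is precisely $-C'$. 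Once that geometric fact is pinned down, the remainder is a clean inductive reduction, and the strengthened form of the statement is needed because removing an edge typically destroys the cycle structure of the outer boundary.
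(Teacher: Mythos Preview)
Your proof is correct and takes a genuinely different route from the paper's. The paper keeps the outer boundary fixed as the cycle $C$ throughout: its inductive step deletes an edge $e$ incident with two \emph{internal} faces $p_1,p_2$, so that $F_p$ for the merged face is a one-step retraction of a combination of $F_{p_1}$ and $F_{p_2}$; the base case is simply $H=C$ with $F_p=C$. You instead peel internal faces off into the outer face, which forces you to generalize the statement to arbitrary closed-walk boundaries and pushes the base case down to trees. Both arguments share the same mildly informal ``replace one constituent of the combination and redo the splicings'' step, and both rely on the retractable segment at the deleted edge; your orientation check (that $F_p$ and $C^*$ traverse a shared boundary edge in the same direction under the clockwise-in-the-plane convention, so $F_p$ and $-C^*$ produce a retractable $v,u,v$) is correct and is the dual of the paper's observation that two internal clockwise face boundaries traverse a shared edge oppositely. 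What your approach buys is a cleaner induction parameter (number of internal faces) and, incidentally, a proof that covers the case where $H\neq C$ has bridges inside $C$ and hence only one internal face---a case the paper's ``otherwise'' clause does not literally address. What the paper's approach buys is that the hypothesis never needs strengthening, so the outer boundary remains a cycle and the base case is a one-liner.
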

\begin{proof}
The claim is obvious if $H=C$, since in that case $H$ has only one internal face $p$ and $F_p=C$.

Otherwise, there exists an edge $e$ incident with two internal faces $p_1$ and $p_2$ such that
$H-e$ is connected.  Let $p$ be the face of $H-e$ containing $p_1$ and $p_2$.  Note that $F_p$
is obtained from a combination $W$ of $F_{p_1}$ and $F_{p_2}$ by a one-step retraction eliminating the two
appearances of the edge $e$.  By induction, there exists a combination of $-C$
and closed walks $F_{p'}$ for internal faces $p'$ of $H$ such that the resulting closed walk $Z_1$
is null-homotopic.  We can replace $F_p$ in this combination by $W$ while keeping the two appearances of $e$
next to each other in the resulting closed walk $Z_2$.  Since $Z_1$ is obtained from $Z_2$ by the one-step
retraction eliminating the two appearances of $e$, $Z_2$ is also null-homotopic.  Since $W$ is a combination of $F_{p_1}$ and $F_{p_2}$,
the claim of the lemma follows.
\end{proof}

For a closed walk $Z$ in a graph $H$ and a homomorphism $f$ from $H$ to another graph $F$,
observe that $f$ maps $Z$ to a closed walk $f(Z)$ in $F$.
\begin{observation}\label{obs-null}
Let $Z$ be a closed walk in a graph $H$ and let $f$ be a homomorphism from $H$ to a graph $F$.
If $Z$ is null-homotopic, then $f(Z)$ is also null-homotopic.
\end{observation}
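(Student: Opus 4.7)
The plan is to show that every one-step retraction performed on $Z$ can be mirrored by a one-step retraction on $f(Z)$, so that a sequence of retractions witnessing null-homotopy of $Z$ produces a corresponding sequence reducing $f(Z)$ to an empty closed walk.

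First I would record the basic functoriality: if $v_{i}v_{i+1}v_{i+2}$ is a retractable segment of a (closed) walk $W$, meaning $v_i=v_{i+2}$, then $f(v_i)f(v_{i+1})f(v_{i+2})$ is a retractable segment of $f(W)$, since $f(v_i)=f(v_{i+2})$, and the one-step retraction of $W$ at this segment maps under $f$ precisely to the one-step retraction of $f(W)$ at the image segment. This requires no hypothesis beyond $f$ being a map on vertex sequences; the fact that $f$ is a graph homomorphism is what guarantees that $f(W)$ is itself a walk in $F$ in the first place.

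Now suppose $Z$ is null-homotopic, so there exists a sequence $r_1,\ldots,r_a$ of one-step retractions that reduces $Z$ to the empty closed walk. Applying $f$ termwise, the above observation gives a corresponding sequence of one-step retractions on $f(Z)$ whose end result is the $f$-image of the empty walk, namely the empty closed walk. Hence some sequence of one-step retractions reduces $f(Z)$ to the empty walk, and by Observation~\ref{obs-unique-closed} the topological retract of $f(Z)$ is unique, so $f(Z)$ is null-homotopic.

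There is no real obstacle here; the only point worth double-checking is that $f(Z)$ may admit extra retractable segments not arising as $f$-images of retractable segments of $Z$ (for instance if $f$ identifies two non-adjacent vertices of $Z$), but by the uniqueness of the topological retract this additional freedom is harmless—we need only exhibit one reduction to the empty walk, and the $f$-image of the reduction of $Z$ supplies it.
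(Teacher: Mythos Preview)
Your proof is correct and follows essentially the same approach as the paper: both argue that a retractable segment of $Z$ maps under $f$ to a retractable segment of $f(Z)$, and that performing the corresponding one-step retraction commutes with applying $f$, whence the full reduction of $Z$ to the empty walk induces a reduction of $f(Z)$ to the empty walk. The appeal to Observation~\ref{obs-unique-closed} is unnecessary (as you yourself note in the final paragraph), but harmless.
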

\begin{proof}
If a segment $S=x_1x_2x_3$ of $Z$ is retractable, i.e., $x_1=x_3$,
then $f(x_1)=f(x_3)$, and thus the segment $f(S)=f(x_1)f(x_2)f(x_3)$ of $f(Z)$ is also retractable.
Moreover, if $Z'$ is the one-step retraction of $Z$ on $S$, then
$f(Z')$ is the one-step retraction of $f(Z)$ on $f(S)$.  The claim then easily follows by induction.
\end{proof}

Let $G$ be a hued graph.  For a closed walk $Z$ in $G$, let $\overline{Z}$ be the subgraph
of $G$ formed by the vertices and edges traversed $Z$.  For a viable 4-coloring $\varphi$ of $\overline{Z}$,
let $\WW(Z,\varphi)$ be the set of the closed walks $f(Z)$ over all homomorphisms $f$ from $\overline{Z}^\varphi$
to the dappled triangular grid $\grid$.  Note that since $f$ is unique
up to the choice of the image of a vertex, all these walks are translations of one another.
We say that $\varphi$ is \emph{null-homotopic on $Z$} if the walks in $\WW(Z,\varphi)$ are null-homotopic.
Let $\WW(Z)$ be the union of
$\WW(Z,\varphi)$ over all viable 4-colorings of $\overline{Z}$. Note that $\WW(Z)$ consists of all closed
walks in $\grid$ with length equal to the length of $Z$ and such that the hues of their vertices match
the hues of vertices of $Z$ in order.

If $G$ is a connected plane graph,
then for each face $p$ of $G$, let $\WW(p)=\WW(Z)$ for the closed walk $Z$ bounding $p$.
Given sets $\WW_1, \ldots, \WW_n$ of closed walks in $\grid$, let $\bigoplus(\WW_1,\ldots,\WW_n)$
denote the set of closed walks that can be obtained as a combination of a walk from $\WW_1$,
a walk from $\WW_2$, \ldots, and a walk from $\WW_n$, in any order.  For a set $\WW$ of closed
walks, let $-\WW=\{-W:W\in\WW\}$.

\begin{lemma}\label{lemma-topol}
Let $G$ be a hued patch with the outer face bounded by a cycle $C$ and let $\varphi$ be a viable 4-coloring of $C$.
If $\varphi$ extends to a 4-coloring of $G$, then for every connected subgraph $H$ of $G$ containing $C$,
$$\WW=\bigoplus(-\WW(C,\varphi), \WW(p)\text{ for all internal faces $p$ of $H$})$$
contains a null-homotopic closed walk.
\end{lemma}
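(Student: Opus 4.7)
The plan is to use the assumed extension of $\varphi$ to build a homomorphism from $G$ into $\grid$, and then transport the topological identity supplied by Observation~\ref{obs-plane} (applied to the plane graph $H$) across that homomorphism.

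First, I would fix a 4-coloring $\beta$ of $G$ that extends $\varphi$, turning $G$ into a dappled patch $G^\beta$, and invoke Theorem~\ref{thm-main} to get a homomorphism $f\colon G^\beta\to\grid$. Two consequences follow immediately: the restriction $f|_{V(C)}$ is a homomorphism from $C^{\varphi}$ to $\grid$, so that $f(C)\in\WW(C,\varphi)$ and hence $-f(C)\in -\WW(C,\varphi)$; and for each internal face $p$ of $H$ bounded by a closed walk $F_p$, restricting $f$ to $\overline{F_p}$ shows that $f(F_p)\in\WW(F_p)=\WW(p)$, since $\beta$ restricts to a viable 4-coloring of $\overline{F_p}$ witnessed by $f|_{\overline{F_p}}$.

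Next, I would apply Observation~\ref{obs-plane} to the connected plane graph $H$ (whose outer face is bounded by $C$) to obtain, in $H$, a combination $Z$ of $-C$ together with the closed walks $F_p$ over the internal faces $p$ of $H$, in some order, such that $Z$ is null-homotopic in $H$. Since a combination is just a splicing of (closed) walks at a shared vertex and a graph homomorphism preserves vertex identity and edges, $f(Z)$ is the corresponding combination in $\grid$ of $-f(C)$ with the walks $f(F_p)$. Combined with the previous paragraph this gives $f(Z)\in\WW$. Finally, Observation~\ref{obs-null} applied to $f$ and the null-homotopic walk $Z$ guarantees that $f(Z)$ is null-homotopic in $\grid$, yielding the desired null-homotopic element of $\WW$.

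The proof is essentially a direct assembly of earlier results, so I do not expect a single hard step. The only point that requires care is the observation that the $f$-image of a combination of walks coincides with the combination of the $f$-images, but this drops out by unpacking the definition of "combination" and using that $f$ is a map of graphs, so it is not a real obstacle.
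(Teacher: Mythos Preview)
Your proof is correct and follows essentially the same approach as the paper: extend $\varphi$ to $\beta$, invoke Theorem~\ref{thm-main} to get a homomorphism $f$ to $\grid$, apply Observation~\ref{obs-plane} to $H$ to get a null-homotopic combination $Z$ of $-C$ and the face boundaries, and then use Observation~\ref{obs-null} to conclude $f(Z)\in\WW$ is null-homotopic. You are a bit more explicit than the paper about why $f(F_p)\in\WW(p)$ and why homomorphisms commute with combinations, but the argument is the same.
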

\begin{proof}
Suppose $\varphi$ extends to a 4-coloring $\beta$ of $G$, and let
$f$ be the corresponding homomorphism from $G^\beta$ to $\grid$.  Let $\beta'$ and $f'$
be the restrictions of $\beta$ and $f$ to $H$; then $f'$ 
is a homomorphism from $H^{\beta'}$ to $\grid$.  By Observation~\ref{obs-plane},
there is a null-homotopic composition of $-C$ and the boundary walks of internal faces of $H$,
and by Lemma~\ref{obs-null}, the corresponding composition of the images of these closed walks
under $f'$ (which belongs to $\WW$) is null-homotopic.
\end{proof}

Let us now give a simple application.  A plane graph $H$ is a \emph{near-quadrangulation}
if it is $2$-connected and all internal faces have length four.

\begin{corollary}\label{cor-quanull}
Let $H$ be a near-quadrangulation and let $G$ be the patch extension of $H$.
If a viable 4-coloring $\varphi$ of the boundary of the outer face of $G$ extends to a 4-coloring of $G$,
then $\varphi$ is null-homotopic on the cycle $C$ bounding the outer face of $G$.
\end{corollary}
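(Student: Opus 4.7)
The plan is to apply Lemma~\ref{lemma-topol} with the near-quadrangulation $H$ itself (viewed as a subgraph of its patch extension $G$, carrying the same plane embedding) as the connected subgraph of $G$ containing $C$. Since $H$ is $2$-connected with all internal faces of length four, standard cycle-space reasoning forces $H$ to be bipartite; the patch-extension hue assignment matches this bipartition, so every internal face $p$ of $H$ is bounded by a $4$-cycle whose vertices alternate in hues $0$ and $1$.

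The crucial intermediate claim is that for every internal face $p$ of $H$, every closed walk in $\WW(p)$ is null-homotopic in $\grid$. Such a walk has length four and lies in the subgraph $\grid_{01}$ of $\grid$ induced by the vertices of hue $0$ and $1$ (which is a copy of the honeycomb lattice). I would first establish the auxiliary fact that $\grid_{01}$ contains no $4$-cycle: a hue-$0$ vertex $(i,j)$ has exactly three hue-$1$ neighbors, obtained by adding $(1,0)$, $(0,1)$, or $(-1,-1)$, and a direct local check shows that no two of these three neighbors share a second hue-$0$ neighbor besides $(i,j)$. Since $\grid_{01}$ is bipartite (hues $0$ and $1$ form the bipartition) and has no $4$-cycle, any closed walk of length four in $\grid_{01}$ must revisit a vertex and hence contain a retractable segment; one retraction leaves a closed walk of length two, which the paper's definitions retract to the empty walk, so the original walk is null-homotopic.

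Equipped with this claim, Lemma~\ref{lemma-topol} furnishes a null-homotopic closed walk $Z$ realized as a combination of some $-W\in -\WW(C,\varphi)$ with one null-homotopic walk from each $\WW(p)$, as $p$ ranges over the internal faces of $H$. Iterating Observation~\ref{obs-compnull}, combination with null-homotopic walks preserves the null-homotopy status of $-W$; thus $Z$ null-homotopic forces $W$ null-homotopic. All elements of $\WW(C,\varphi)$ are translates of $W$ in $\grid$, and translations preserve null-homotopy, so every walk in $\WW(C,\varphi)$ is null-homotopic, which is exactly the statement that $\varphi$ is null-homotopic on $C$. The only substantive obstacle is the $C_4$-freeness of $\grid_{01}$; once that is in hand the rest is a straightforward orchestration of the earlier observations, and it is this $C_4$-freeness that explains why the quadrangulation hypothesis on $H$ is the right assumption.
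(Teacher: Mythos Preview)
Your proposal is correct and follows essentially the same route as the paper: apply Lemma~\ref{lemma-topol} with $H$ as the subgraph, note that each internal $4$-face of $H$ has only two hues so $\WW(p)$ consists of null-homotopic walks, and then strip these off via Observation~\ref{obs-compnull} to conclude $\WW(C,\varphi)$ is null-homotopic. The only difference is that the paper states the null-homotopy of two-hued $4$-walks in $\grid$ as a bare observation, whereas you spell it out via the $C_4$-freeness of the honeycomb sublattice $\grid_{01}$; your verification of that fact is correct.
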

\begin{proof}
Observe that if $K$ is a hued 4-cycle using only two values of hue, then $\WW(K)$ only contains null-homotopic walks. Recall that by definition, this is the case for $H$.
Hence, for each internal face $p$ of $H$, $\WW(p)$ only contains null-homotopic walks.
By Lemma~\ref{lemma-topol} and Observation~\ref{obs-compnull}, this implies that $\WW(C,\varphi)$ contains
a null-homotopic closed walk, and thus $\varphi$ is null-homotopic on $C$.
\end{proof}

The condition from Lemma~\ref{lemma-topol} is necessary but not sufficient.  However, for the special case of 
the patch extension of a near-quadrangulation $H$, it is easy to strengthen it to a sufficient condition.
Let $C$ be the cycle bounding the outer face of $H$.  We say a path $P$ in $H$
is a \emph{generalized chord} of $C$ with \emph{base $B$} if $P$ intersects $C$ exactly
in the ends of $P$, and $B$ is a subpath of $C$ with the same ends (note that there are two
possible choices for a base of any given generalized chord).
Let $\varphi$ be a viable $4$-coloring of $C$ and let $f$ be the corresponding homomorphism to
the dappled triangular grid $\grid$.  A \emph{$\varphi$-shortcut} is a generalized chord $P$ of $C$ with base $B$
such that the topological retract of $f(B)$ has more edges than $P$.  Note that by Observation~\ref{obs-decwalk},
if $P$ is a $\varphi$-shortcut with base $B$ and $\varphi$ is null-homotopic on $C$, then
$P$ is also a $\varphi$-shortcut for the other possible base of $P$.

\begin{lemma}\label{lemma-noshort}
Let $H$ be a near-quadrangulation with the outer face bounded by a cycle $C$ and let $G$ be the patch extension of $H$.
A viable 4-coloring $\varphi$ of $C$ extends to a 4-coloring of $G$ if and only if $\varphi$ is null-homotopic on $C$ and
$H$ has no $\varphi$-shortcuts.
\end{lemma}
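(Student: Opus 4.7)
The plan is to prove both directions separately. The central observation used throughout is that the subgraph of $\grid$ induced by the hue-$0$ and hue-$1$ vertices is a $3$-regular bipartite planar graph whose faces (obtained by removing each hue-$2$ vertex of $\grid$) are hexagons; in particular, this subgraph has girth $6$, contains no $4$-cycles, and every closed walk of length $4$ in it must repeat a vertex at walk-distance exactly two and consequently retracts to the empty walk.

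For the forward direction, assume $\varphi$ extends to a $4$-coloring $\beta$ of $G$, inducing a homomorphism $f\colon G^\beta\to\grid$ via Theorem~\ref{thm-main}; null-homotopy of $f$ on $C$ is immediate from Corollary~\ref{cor-quanull}. Suppose $P$ is a generalized chord of $C$ with base $B$, and let $H'\subseteq H$ be the sub-near-quadrangulation enclosed by the cycle $P\cup B$. Applying Observation~\ref{obs-plane} to $H'$ yields a null-homotopic combination of its internal $4$-face boundary walks together with the reverse of $B\cdot(-P)$. Under $f$, each internal $4$-face of $H'$ maps to a closed walk of length $4$ in the hue-$\{0,1\}$ subgraph, which is null-homotopic by the central observation. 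Combining Observations~\ref{obs-null} and~\ref{obs-compnull} repeatedly, the image $f(B\cdot(-P))$ is null-homotopic, and Observation~\ref{obs-decwalk} then implies that $f(B)$ and $f(P)$ share the same topological retract. Since the retract of $f(P)$ has at most $|P|$ edges, so does that of $f(B)$, showing that $P$ is not a $\varphi$-shortcut.

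For the backward direction, assume $\varphi$ is viable, null-homotopic on $C$, and $H$ has no $\varphi$-shortcuts, and proceed by induction on $|V(H)|$. The base case is $H$ bounding a single interior $4$-face: the length-$4$ walk $f(C)$ in the hue-$\{0,1\}$ subgraph has a repeated vertex, its distinct images lie around a hexagonal face of that subgraph, and the hue-$2$ centre of this hexagonal face extends $\varphi$ to the sole interior vertex of $G$. For the inductive step, first reduce along a chord of $C$ in $H$ if one exists: for a chord $e=uv$, the no-shortcut hypothesis applied to $e$ (of length $1$) with either sub-path of $C$ from $u$ to $v$ as base forces the topological retract of $f$ on that sub-path to equal the single edge $f(e)$. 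Consequently the boundary walks of the two smaller near-quadrangulations $H_1,H_2$ obtained by cutting $H$ along $e$ are each null-homotopic, and the no-shortcut condition descends: bases contained entirely in $C$ inherit directly from $H$, while bases using $e$ can be shown via Observation~\ref{obs-decwalk} and the null-homotopy of the boundary walk of $H_i$ to have the same topological retract as a corresponding base in $C$, so the original no-shortcut bound from $H$ still applies. When $H$ is chord-free, use the retractable segment $v_{i-1}v_iv_{i+1}$ of $f(C)$ guaranteed by Observation~\ref{obs-two} together with the structural constraints on the $4$-faces incident to $v_i$ to identify a local reduction that strictly decreases $|V(H)|$ while preserving the hypotheses.

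The main obstacle is the chord-free inductive step, where the reduction must be defined so as to preserve viability, null-homotopy of the boundary walk, and absence of $\varphi$-shortcuts. This requires a careful case analysis depending on whether the two $4$-faces of $H$ containing the edges $v_{i-1}v_i$ and $v_iv_{i+1}$ coincide (in which case their shared fourth vertex is an interior vertex $u$ adjacent to both $v_{i-1}$ and $v_{i+1}$) or are distinct, and on whether the retract identification $f(v_{i-1})=f(v_{i+1})$ is realized by a short path in the interior of $H$; tracking how generalized chords and their $f$-retracts behave under the chosen local modification is the technical crux.
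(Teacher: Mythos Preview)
Your forward direction is fine and is essentially the paper's argument with Corollary~\ref{cor-quanull} unrolled: the paper simply applies that corollary to the sub-near-quadrangulation bounded by $P\cup B$ and then invokes Observation~\ref{obs-decwalk}, which is exactly what your chain of Observations~\ref{obs-plane}, \ref{obs-null}, \ref{obs-compnull}, \ref{obs-decwalk} reproduces.

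The backward direction, however, is not a proof but a plan, and the plan has a real gap in the chord-free step. You explicitly flag the chord-free inductive step as ``the main obstacle'' and then describe only what a reduction would have to accomplish, not what it is. Two concrete problems:
\begin{itemize}
\item Your induction parameter is $|V(H)|$. The only natural local operation in the chord-free situation is to cut the disk open along an edge $uv$ with $u\in V(C)$ and $v\notin V(C)$; this is precisely what the paper does, but it \emph{increases} $|V(H)|$ (the vertex $u$ is duplicated), so it is incompatible with your induction. The paper avoids this by inducting on $|E(H)\setminus E(C)|$, which does drop under the cut.
\item Your case split is too coarse. You split on ``$C$ has a single-edge chord'' versus ``$C$ is induced''. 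The paper splits on ``some generalized chord $P$ is \emph{tight}, i.e.\ the retract of $f(B)$ has exactly $|E(P)|$ edges'' versus ``every generalized chord is slack by at least two''. Your chord case is the special instance $|E(P)|=1$ of the paper's tight case; everything else, including long tight generalized chords, lands in your chord-free case. But tightness (not merely the existence of a retractable segment on $f(C)$) is what makes the decomposition go through: in the paper's tight case the retract $R$ dictates a unique extension of $f$ to $P$, and the verification that no $\varphi$-shortcuts appear in the two pieces uses the equality $|E(P)|=|E(R)|$ in an essential way. In the slack case the paper uses $|E(P)|\ge |E(R)|+2$ (forced by parity) to absorb the extra edge introduced by cutting. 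Your retractable-segment idea gives you neither of these quantitative handles.
\end{itemize}
So as written the argument does not close. The fix is exactly the paper's: change the induction to $|E(H)\setminus E(C)|$; in the inductive step, look not for a chord of $C$ but for a generalized chord $P$ with $|E(P)|$ equal to the retract length of $f(B)$, split along it, and verify (this is the genuinely delicate part) that no $\varphi$-shortcuts are created in either piece; if no tight generalized chord exists, cut along a single edge from $C$ into the interior and use the slack of $2$ to rule out new shortcuts.
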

\begin{proof}
Suppose first that $\varphi$ extends to a 4-coloring $\beta$ of $G$,
and let $f$ be the corresponding homomorphism from $G^\beta$ to the dappled triangular grid $\grid$.
By Corollary~\ref{cor-quanull}, $\varphi$ is null-homotopic on $C$, and thus it suffices to show that $H$ has no $\varphi$-shortcuts.
Let $P$ be a generalized chord of $C$ with base $B$, let $H'$ be the subgraph of $H$ drawn in the closed disk bounded
by the cycle $C'=P\cup B$, and let $G'$ be the patch extension of $H'$, considered as a subgraph of $G$.
Let $\varphi'$ be the restriction of $\beta$ to $C'$.  Since $\varphi'$ extends to a 4-coloring of $G'$,
Corollary~\ref{cor-quanull} implies $\varphi'$ is null-homotopic on $C'$.  By Observation~\ref{obs-decwalk}, $f(P)$
has the same topological retract as $f(B)$, and thus $P$ has at least as many edges as the topological retract of $f(B)$.

We prove the converse by induction on $|E(H)\setminus E(C)|$.
Let $f$ be the homomorphism from $C^\varphi$ to the dappled triangular grid $\grid$.
If $H=C$, then $|C|=4$ and since $\varphi$ is null-homotopic on $C$, 
$f$ maps two vertices of $C$ to the same vertex of $\grid$.  Consequently, $\varphi$ uses at most three
colors on $C$, and since $|V(G)\setminus V(C)|=1$, $\varphi$ extends to a 4-coloring of $G$.
Hence, we can assume that $H\neq C$.

Suppose next that there exists a generalized chord $P$ of $C$ with base $B$ such that
the topological retract $R$ of $f(B)$ has exactly $|E(P)|$ edges.  Then $f$ has a unique extension to $P$
such that $f(B\cup P)$ is null-homotopic---if $P=v_0,v_1,\ldots, v_k$ and
$R=u_0,\ldots, u_k$, where $f(v_0)=u_0$ and $f(v_k)=u_k$, we set $f(v_i)=u_i$ for $i=1,\ldots, k-1$.
Extend $\varphi$ to $P$ by giving each vertex $v\in V(P)$ the color of $f(v)$.
Let $B'$ be the subpath of $C$ from $v_0$ to $v_k$ edge-disjoint from $B$.  By Observation~\ref{obs-decwalk},
$f(B)$ and $f(B')$ have the same topological retract, and consequently $f(B'\cup P)$ is null-homotopic as well.
Let $C_1=B\cup P$ and $C_2=B'\cup P$.  For $i\in\{1,2\}$, let $H_i$ and $G_i$ be the subgraphs of $H$ and $G$ drawn in the closed disk bounded by $C_i$;
note that $G_i$ is the patch extension of $H_i$ and that $\varphi$ is null-homotopic on $C_i$.

We claim that $H_i$ has no $\varphi$-shortcuts: By symmetry, we can assume that $i=1$.
Suppose for a contradiction that $Q$ is a $\varphi$-shortcut of $H_i$.
If both endpoints of $Q$ were contained in $B$, then $Q$ would also be a $\varphi$-shortcut of $H$.
If both endpoints of $Q$ were contained in $P$, then consider the subpath $P'$ of $P$ between the
endpoints of $Q$.  Since $f(P)$ is equal to its topological retract, $f(P')$ is also equal to its retract, and
since $Q$ is a $\varphi$-shortcut, we have $|E(Q)|<|E(P')|$.  But then the path $P''$ obtained from $P$ by
replacing $P'$ by $Q$ is shorter than $P$, and since $R$ has exactly $|E(P)|$ edges,
$P''$ would be a $\varphi$-shortcut of $H$.  Hence, $Q$ joins a vertex $v_a$ of $P$ (with $1\le i\le k-1$)
with a vertex $w$ of $V(B)\setminus V(P)$. 
Let $B_1$ and $B_2$ be the subpaths of $B$ from $v_0$ and $v_k$ to $w$,
and for $j\in \{1,2\}$, let $R_j$ be the topological retract of $f(B_j)$.  Since $R$ is the topological retract of the concatenation of $R_1$ and $R_2$
and both $R_1$ and $R_2$ are already topological retracts, $R$ is a concatenation of a prefix $R'_1$ of $R_1$ with a suffix $R'_2$ of $R_2$.
Since $|E(R'_1)|+|E(R'_2)|=k=a+(k-a)$, we have $|E(R'_1)|\ge a$ or $|E(R'_2)|\ge k-a$. 
By symmetry we can assume that $|E(R'_1)|\ge a$.
Let $P_0=v_0\ldots v_a$ and let $D$ be the base of $Q$ consisting of $P_0$ and $B_1$.  
Since the prefix $R'_1$ of $R_1$ of length at least $a=|E(P_0)|$ is also a prefix of $R$,
and $f(P)=R$, the topological retract of $f(D)$ is obtained from $R_1$ by deleting the first $a$ vertices,
and thus it has length $|E(R_1)|-a$.  Since $Q$ is a $\varphi$-shortcut, it follows that
$|E(Q)|<|E(R_1)|-a$ and $|E(Q\cup P_0)|<|E(R_1)|$.  However, this implies that $Q\cup P_0$ is a $\varphi$-shortcut in $H$,
which is a contradiction.

Therefore, neither $H_1$ nor $H_2$ has a $\varphi$-shortcut, and thus by the induction hypothesis,
$\varphi$ extends to a $4$-coloring of $G_1$ and $G_2$.  This gives an extension of $\varphi$ to a $4$-coloring of $G$.

Finally, suppose that every generalized chord $P$ of $C$ with base $B$ has more edges than the topological retract $R$ of $f(B)$.
Since $H$ is bipartite, the lengths of $P$ and $B$ have the same parity.   Clearly the length of a walk and its topological retract
also has the same parity, and thus $|E(P)|\equiv |E(R)|\pmod 2$.  Consequently, $|E(P)|\ge |E(R)|+2$.
In particular, this implies that $C$ is an induced cycle.

Since $H\neq C$, there exists a vertex $v\in V(H)\setminus V(C)$ with a neighbor $u$ in $C$.
Let $H'$ be the graph obtained from $H$ by cutting from the outer face along the edge $uv$; i.e., $u$ is split into two vertices $u_1$
and $u_2$, where $u_1$ is adjacent to $v$ and the vertices adjacent to $u$ via edges leaving $u$ to the left from $uv$
(as seen from the outer face), and $u_2$ is adjacent to $v$ and the vertices adjacent to $u$ via edges leaving $u$ to the right from $uv$.
Let $\varphi'$ be the 4-coloring of the cycle $C'$ bounding the outer face of $H'$ matching $\varphi$ on $V(C)\setminus \{u\}$,
with $\varphi'(u_1)=\varphi'(u_2)=\varphi(u)$ and with $\varphi'(v)$ chosen to be an arbitrary color different from $\varphi(u)$.
Since $\varphi$ is null-homotopic on $C$, observe that $\varphi'$ is null-homotopic on $C'$.  Let $f'$ be the corresponding homomorphism
from $(C')^{\varphi'}$ to $\grid$.

Suppose now that $H'$ has a $\varphi'$-shortcut $Q$ with base $D$.  If $Q$ does not start in $v$, it is also a $\varphi$-shortcut in $H$, which is a
contradiction.  Otherwise, consider the generalized chord $P=Q+uv$ in $H$ and let $B$ be its base obtained from $D-v$
by replacing $u_1$ or $u_2$ by $u$.  Let $R$ be the topological retract of $f(B)$,
and recall that $|E(P)|\ge |E(R)|+2$.
Note that the topological retract of $f'(D)$ has length at most $|E(R)|+1$, and since $Q$ is a $\varphi'$-shortcut, we have
$|E(P)|-1=|E(Q)|<|E(R)|+1\le |E(P)|-1$.  This is a contradiction.  Therefore, there are no $\varphi'$-shortcuts,
and by the induction hypothesis, $\varphi'$ extends to a 4-coloring of the patch extension $G'$ of $H$.  Since $G$ is obtained
from $G'$ by identifying $u_1$ with $u_2$ and $\varphi'(u_1)=\varphi'(u_2)=\varphi(u)$, we conclude that $\varphi$ extends
to a $4$-coloring~of~$G$.
\end{proof}

It is tempting to ask whether the absence of shortcuts does not imply the extendability to 4-coloring in more general
situations, say just assuming that $H$ is a 2-connected bipartite planar graph.  Indeed, the assumption that $H$ is a near-quadrangulation
is only used in the base case of the induction.  However, this base case also shows why it is needed:  If $H$ is a 6-cycle with vertices
colored $1$, $2$, $1$, $3$, $1$, $4$ in order, the precoloring is null-homotopic on $C$ and yet it cannot be extended to a 4-coloring
of the patch extension of $H$.

\bibliographystyle{plain}
\bibliography{precolbib.bib}

\begin{thebibliography}{10}

\bibitem{fourcolthm}
Kenneth Appel and Wolfgang Haken.
\newblock {Every planar map is four colorable. Part I: Discharging}.
\newblock {\em Illinois Journal of Mathematics}, 21(3):429 -- 490, 1977.

\bibitem{trfree6}
Zden\v{e}k Dvo\v{r}\'ak, Daniel Kr\'al', and Robin Thomas.
\newblock Three-coloring triangle-free graphs on surfaces {VI}.
  $3$-colorability of quadrangulations.
\newblock {\em ArXiv}, 1509.01013, September 2015.

\bibitem{trfree2}
Zden\v{e}k Dvo\v{r}\'ak, Daniel Kr\'al', and Robin Thomas.
\newblock Three-coloring triangle-free graphs on surfaces {II}. $4$-critical
  graphs in a disk.
\newblock {\em Journal of Combinatorial Theory, Series B}, 132:1--46, 2018.

\bibitem{trfree3}
Zden\v{e}k Dvo\v{r}\'ak, Daniel Kr\'al', and Robin Thomas.
\newblock Three-coloring triangle-free graphs on surfaces {III}. {G}raphs of
  girth five.
\newblock {\em Journal of Combinatorial Theory, Series B}, 145:376--432, 2020.

\bibitem{trfree4}
Zden\v{e}k Dvo\v{r}\'ak, Daniel Kr\'al', and Robin Thomas.
\newblock Three-coloring triangle-free graphs on surfaces {IV}. {B}ounding face
  sizes of $4$-critical graphs.
\newblock {\em Journal of Combinatorial Theory, Series B}, 150:270--304, 2021.

\bibitem{3colalgorithm}
Zdeněk Dvořák, Daniel Král', and Robin Thomas.
\newblock Three-coloring triangle-free graphs on surfaces {VII}. a linear-time
  algorithm.
\newblock {\em Journal of Combinatorial Theory, Series B}, 152:483--504, 2022.

\bibitem{subgraphs}
David Eppstein.
\newblock Subgraph isomorphism in planar graphs and related problems.
\newblock In {\em Proceedings of the Sixth Annual ACM-SIAM Symposium on
  Discrete Algorithms}, SODA '95, page 632–640, USA, 1995. Society for
  Industrial and Applied Mathematics.

\bibitem{fisk}
Steve Fisk.
\newblock Geometric coloring theory.
\newblock {\em Advances in Mathematics}, 24(3):298--340, 1977.

\bibitem{Fisk78}
Steve Fisk.
\newblock The nonexistence of colorings.
\newblock {\em Journal of Combinatorial Theory, Series B}, 24:247--248, 1978.

\bibitem{hellretraction}
Pavol Hell.
\newblock Absolute retracts in graphs.
\newblock In Ruth~A. Bari and Frank Harary, editors, {\em Graphs and
  Combinatorics}, pages 291--301, Berlin, Heidelberg, 1974. Springer Berlin
  Heidelberg.

\bibitem{locplanq}
Joan Hutchinson.
\newblock Three-coloring graphs embedded on surfaces with all faces even-sided.
\newblock {\em Journal of combinatorial Theory, Series B}, 65:139--155, 1995.

\bibitem{hutcheuler}
Joan Hutchinson, Bruce Richter, and Paul~D. Seymour.
\newblock Colouring {E}ulerian triangulations.
\newblock {\em Journal of Combinatorial Theory, Series B}, 84:225--239, 2002.

\bibitem{Mohar2002ColoringET}
Bojan Mohar.
\newblock Coloring {E}ulerian triangulations of the projective plane.
\newblock {\em Discret. Math.}, 244:339--343, 2002.

\bibitem{MohSey}
Bojan Mohar and Paul~D. Seymour.
\newblock Coloring locally bipartite graphs on surfaces.
\newblock {\em Journal of Combinatorial Theory, Series B}, 84:301--310, 2002.

\bibitem{nakaeuler}
Atsuhiro Nakamoto.
\newblock 5-chromatic even triangulations on surfaces.
\newblock {\em Discrete Mathematics}, 308(12):2571--2580, 2008.

\bibitem{NakNegOta}
Atsuhiro Nakamoto, Seiya Negami, and Katsurhiro Ota.
\newblock Chromatic numbers and cycle parities of quadrangulations on
  nonorientable closed surfaces.
\newblock {\em Discrete Math.}, 285:211--218, 2004.

\bibitem{Hyperbolicfamilies}
Luke Postle and Robin Thomas.
\newblock Hyperbolic families and coloring graphs on surfaces.
\newblock {\em Transactions of the American Mathematical Society, Series B},
  5:167--221, 2018.

\bibitem{rs6}
Neil Robertson and Paul~D. Seymour.
\newblock Graph minors. {VI}. {D}isjoint paths across a disc.
\newblock {\em Journal of Combinatorial Theory, Series B}, 41:115--138, 1986.

\bibitem{rs7}
Neil Robertson and Paul~D. Seymour.
\newblock Graph {M}inors {VII}. {D}isjoint paths on a surface.
\newblock {\em Journal of Combinatorial Theory, Series B}, 45:212--254, 1988.

\bibitem{planar3col}
Larry Stockmeyer.
\newblock Planar 3-colorability is polynomial complete.
\newblock {\em SIGACT News}, 5(3):19–25, jul 1973.

\bibitem{Th2}
Carsten Thomassen.
\newblock Five-coloring graphs on surfaces.
\newblock {\em Journal of Combinatorial Theory, Series B}, 59:89--105, 1993.

\bibitem{THOMASSEN}
Carsten Thomassen.
\newblock Color-critical graphs on a fixed surface.
\newblock {\em Journal of Combinatorial Theory, Series B}, 70(1):67--100, 1997.

\bibitem{thomassen-surf}
Carsten Thomassen.
\newblock The chromatic number of a graph of girth 5 on a fixed surface.
\newblock {\em Journal of Combinatorial Theory, Series B}, 87:38--71, 2003.

\bibitem{tutteduals}
William Tutte.
\newblock A contribution to the theory of chromatic polynomials.
\newblock {\em Canadian Journal of Mathematics}, 6:80–91, 1954.

\bibitem{Youngs}
Daniel Youngs.
\newblock 4-chromatic projective graphs.
\newblock {\em Journal of Graph Theory}, 21:219--227, 1996.

\end{thebibliography}
\end{document}